\newtheorem{remark}{Remark}[section]
\newtheorem{theorem}[remark]{Theorem}
\newtheorem{proposition}[remark]{Proposition}
\newtheorem{lemma}[remark]{Lemma}
\newcommand{\frakH}{\mathfrak{H}}
\newcommand{\frakK}{\mathfrak{K}}
\newcommand{\frakr}{\mathfrak{r}}
\newcommand{\fraks}{\mathfrak{s}}
\newcommand{\frakt}{\mathfrak{t}}
\newcommand{\frakm}{\mathfrak{m}}
\newcommand{\fraki}{\mathfrak{i}}
\DeclareMathOperator{\clus}{clust}
\DeclareMathOperator{\Ad}{Ad}
\DeclareMathOperator{\supp}{supp}
\newcommand{\GsrG}{G {_{s}\times_{r}} G}
\title{ The  Fell compactification and non-Hausdorff groupoids}
\author{Thomas Timmermann\\[1ex]
\texttt{timmermt@math.uni-muenster.de}\\ SFB 478
``Geometrische Strukturen in der Mathematik''\\ Hittorfstr.\
27, 48149 M\"unster}
\date{\today}
\begin{document}
\maketitle

\begin{abstract}
 A compactification of Fell is applied to locally compact
  non-Hausdorff groupoids and yields locally compact
  Hausdorff groupoids. In the \'etale case, this
  construction provides a geometric picture for the
  left-regular representations introduced by Khoshkam and
  Skandalis.
\\
 \textbf{Keywords:}  groupoid, compactification, regular representation
\\
 \textbf{MSC 2000:}  22A22 

\end{abstract}

\section{Introduction}

In \cite{fell}, Fell introduced a compactification of
locally compact non-Hausdorff spaces. We apply this
construction to groupoids with open range and source map and
Hausdorff unit space and show that if one leaves out the
point at infinity, one obtains locally compact Hausdorff
groupoids. If the non-Hausdorff  groupoid   was \'etale, then also
the Hausdorff groupoid is \'etale and provides a geometric
picture for the construction of the left-regular
representation of the non-Hausdorff groupoid given by
Khoshkam and Skandalis in \cite{khoshkam}.
We also comment on related constructions of Tu \cite{tu}.
In the remainder of the introduction, we recall the
constructions and results of \cite{fell}.

\paragraph{The Fell compactification}
Let $X$ be a topological space. We call a subset $K
\subseteq X$ {\em quasi-compact} if it has the finite
covering property, and {\em compact} if it is quasi-compact
and Hausdorff.\footnote{ In \cite{fell}, Fell calls compact
  what we call quasi-compact.}  We assume that $X$ is {\em
  locally compact} in the sense that every point $x \in X$
has a compact neighbourhood. In that case, every
neighbourhood of any point $x \in X$ contains a compact
neighbourhood of $x$, and $X$ is locally compact in the
sense of \cite{fell}.

Let $(x_{\nu})_{\nu}$ be a net in $X$. We denote by
$\clus_{\nu} x_{\nu}$ and $\lim_{\nu} x_{\nu}$ the set of
all cluster points and the set of all limit points of this
net, respectively, and call it {\em primitive} if it
satisfies the following equivalent conditions:
  \begin{enumerate}
  \item  Every cluster point of the net is a limit point:
    $\clus_{\nu} x_{\nu} = \lim_{\nu} x_{\nu}$.
  \item Each $x \in X \setminus \lim_{\nu} x_{\nu}$ has a
    neighbourhood that is eventually left by the net.
  \item Every quasi-compact set that contains no limit point
    is eventually left by the net. 
  \end{enumerate}

  The {\em Fell compactification} of $X$ is the set $\frakK
  X$ of all limit sets of primitive nets in $X$, equipped
  with the topology induced by the subbasis that consists of all sets of the
  form ${\cal U}_{V} = \{ A \in \frakK X \mid A \cap V \neq
  \emptyset\}$ and ${\cal U}^{Q} = \{ A \in \frakK X \mid A
  \cap Q =\emptyset \}$, where $V \subseteq X$ is open and
  $Q \subseteq X$ is quasi-compact.  The space $\frakK X$ is
  compact \cite{fell}, and  $X$ embeds into $\frakK
  X$ since each constant net is primitive. This embedding is
  not necessarily continuous but has dense image.  For each
  net $(x_{\nu})_{\nu}$ in $X$, the net
  $(\{x_{\nu}\})_{\nu}$ converges in $\frakK X$ to some set
  $A \subseteq X$ if and only if $(x_{\nu})_{\nu}$ is
  primitive and $A=\lim_{\nu} x_{\nu}$ in $X$ \cite{fell}.
  If $X$ is quasi-compact, then $\emptyset \not\in \frakK X$
  and we let $\frakH X := \frakK X$. Otherwise, $\emptyset
  \in \frakK X$ and $\frakK X$ is the one-point
  compactification of $\frakH X:=\frakK X \setminus \{
  \emptyset\}$.  If $X$ is Hausdorff, then $\frakH X = X$.

  The Fell compactification is functorial in the following
  sense.  Let $X,Y$ be locally compact spaces and let $f
  \colon X \to Y$ be continuous and {\em proper} in the
  sense that the preimage of every quasi-compact subset is
  quasi-compact again. Using condition 3 above, one finds
  that the image of each primitive net in $X$ is primitive
  in $Y$, so that the map $\frakK f \colon \frakK X \to
  \frakK Y$, $A \mapsto f(A)$, is well-defined. One easily
  checks that this map is continuous and that the
  restriction $\frakH f :=\frakK f |_{\frakH X} \colon
  \frakH X \to \frakH Y$ is proper.

\paragraph{Quasi-continuous functions}

Let $X$ be as above and let $Y$ be a locally compact
Hausdorff space. A map $f\colon X \to Y$ is {\em
  (w-)quasi-continuous} if for each primitive net
$(x_{\nu})_{\nu}$ in $X$ (with non-empty limit set), the net
$(f(x_{\nu}))_{\nu}$ converges in $Y$. Evidently, the
restriction $f|_{X}$ of each continuous map $f \colon \frakH
X \to Y$ is w-quasi-continuous. Conversely, the following
lemma implies that for each w-quasi-continuous function $f
\colon X \to Y$, the extension $\tilde f \colon \frakH X \to Y$ defined by
$\tilde f(\lim_{\nu} x_{\nu})=\lim_{\nu}f(x_{\nu})$ for each
primitive net $(x_{\nu})_{\nu}$ in $X$ is continuous.
 \begin{lemma} \label{lemma:continuous} Let $Z$ be a
   topological space with a dense subset $Z_{0}$. A map $f
   \colon Z \to Y$ is continuous if and only if
   $f(z)=\lim_{\nu} f(z_{\nu})$ for each limit point $z\in
   Z$ of each net $(z_{\nu})_{\nu}$ in $Z_{0}$.
\end{lemma}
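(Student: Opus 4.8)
The forward (``only if'') direction is immediate: if $f$ is continuous and $z$ is a limit point of a net $(z_{\nu})_{\nu}$, then $(f(z_{\nu}))_{\nu}$ converges to $f(z)$, and since $Y$ is Hausdorff this is the unique limit, so $f(z)=\lim_{\nu}f(z_{\nu})$. This uses nothing about where the $z_{\nu}$ lie. The substance is the converse, which I would prove by verifying continuity of $f$ at each point $z\in Z$ directly: for every neighbourhood $W$ of $f(z)$ I must produce a neighbourhood of $z$ that $f$ maps into $W$.

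The plan is to first establish an intermediate statement involving only the dense subset, namely that \emph{for every neighbourhood $W$ of $f(z)$ there is a neighbourhood $U$ of $z$ with $f(U\cap Z_{0})\subseteq W$}. I would argue by contradiction. If this fails, then for every neighbourhood $U$ of $z$ there is a point $z_{U}\in U\cap Z_{0}$ with $f(z_{U})\notin W$; here density of $Z_{0}$ guarantees $U\cap Z_{0}\neq\emptyset$ so that such a choice is possible. Directing the neighbourhoods of $z$ by reverse inclusion turns $(z_{U})_{U}$ into a net in $Z_{0}$ that converges to $z$, so $z$ is one of its limit points and the hypothesis forces $f(z_{U})\to f(z)$. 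But $f(z_{U})$ stays outside the neighbourhood $W$ of $f(z)$ for every $U$, contradicting convergence.

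It remains to upgrade this control on $Z_{0}$ to all of $Z$, and this is the step where the Hausdorffness and local compactness of $Y$ enter, since a locally compact Hausdorff space is regular. Given a neighbourhood $W$ of $f(z)$, I would choose by regularity an open $W'$ with $f(z)\in W'$ and $\overline{W'}\subseteq W$, and let $U$ be a neighbourhood of $z$ with $f(U\cap Z_{0})\subseteq W'$, as furnished by the claim. For any point $y$ in $\operatorname{int} U$, density provides a net $(w_{\beta})_{\beta}$ in $(\operatorname{int} U)\cap Z_{0}$ converging to $y$, and applying the hypothesis to $y$ and this net yields $f(w_{\beta})\to f(y)$. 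Since every $f(w_{\beta})$ lies in $W'\subseteq\overline{W'}$ and $\overline{W'}$ is closed, the limit satisfies $f(y)\in\overline{W'}\subseteq W$. Thus $f$ maps the open neighbourhood $\operatorname{int} U$ of $z$ into $W$, which is continuity at $z$. I expect the main obstacle to be exactly this passage from $Z_{0}$ to $Z$: the hypothesis constrains $f$ only along nets in the dense set, and without the regularity of $Y$ the closure $\overline{W'}$ could escape $W$, so the two-step choice of $W'$ nested inside $W$ is essential.
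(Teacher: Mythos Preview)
Your argument is correct, and it differs in structure from the paper's. The paper does not split the task into ``control on $Z_{0}$'' followed by an upgrade via regularity; instead it runs a single contradiction against an arbitrary net $(z_{\nu})$ in $Z$ converging to $z$ with $f(z_{\nu})\not\to f(z)$. Passing to the one-point compactification $Y^{+}$, it extracts a subnet with $f(z_{\nu})\to y$ for some $y\in Y^{+}\setminus\{f(z)\}$, separates $y$ from $f(z)$ by disjoint open sets $V_{y},V_{f(z)}$, and then for each open neighbourhood $U$ of $z$ picks first some $z_{\nu}\in U$ with $f(z_{\nu})\in V_{y}$ and then, approximating $z_{\nu}$ by a net in $Z_{0}$ and invoking the hypothesis at $z_{\nu}$, a point $z_{U}\in U\cap Z_{0}$ with $f(z_{U})\in V_{y}$. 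The resulting net $(z_{U})_{U}$ in $Z_{0}$ converges to $z$ while $f(z_{U})\notin V_{f(z)}$, contradicting the hypothesis at $z$. Your route trades the compactification for the regularity of $Y$: you never need to locate a cluster point of $(f(z_{\nu}))$, only to nest $\overline{W'}\subseteq W$. This makes your proof go through for any regular Hausdorff target $Y$, whereas the paper's argument genuinely uses that $Y^{+}$ is compact Hausdorff to produce the point $y$ and separate it from $f(z)$; on the other hand the paper's version stays within one contradiction and avoids the second pass over points of $\operatorname{int}U$.
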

\begin{proof}
  Let $f \colon Z \to Y$ be some map, let
  $(z_{\nu})_{\nu}$ be a net in $Z$ with limit point $z \in
  Z$, and assume that $(f(z_{\nu}))_{\nu}$ does not converge
  to $f(z)$. Replacing $Y$ by its one-point compactification
  $Y^{+}$ and $(z_{\nu})_{\nu}$ by a subnet, we may assume
  that $(f(z_{\nu}))_{\nu}$ converges to some $y \in
  Y^{+}$. Choose disjoint neighbourhoods $V_{y}, V_{f(z)}$
  of $y$ and $f(z)$. For each neighbourhood $U$ of $z$, we
  find some $\nu$ such that $z_{\nu} \in U$ and, using the
  assumption on $(z_{\nu})_{\nu}$, a point $z_{U} \in U \cap Z_{0}$
  such that $f(z_{U}) \in V_{y}$. Ordering the
  neighbourhoods of $z$ by inclusion, we obtain a net
  $(z_{U})_{U}$ in $Z_{0}$ that converges to $z$ and such
  that $f(z_{U}) \not\in V_{f(z)}$ for all $U$. This is a contradiction.
\end{proof} 
Summarizing, we obtain a bijection between all
w-quasi-continuous maps $X \to Y$ and all continuous maps
$\frakH X \to Y$, as already stated in
\cite{fell} but without proof.

\section{Application to locally compact
  groupoids}

Let $G$ be a locally compact groupoid such that the unit
space $G^{0}$ is Hausdorff and the range and the source maps
$r,s \colon G \to G^{0}$ are open
\cite{paterson,renault}. Recall that an {\em action} of $G$
on a topological space $X$ consists of continuous maps $\rho
\colon X \to G^{0}$ and $\mu \colon G {_{s}\times_{\rho}} X
\to X$, written $(x,y) \mapsto xy$, such that $\rho(x)x =
x$, $\rho(\gamma x)=r(\gamma)$, and $
(\gamma'\gamma)x=\gamma'(\gamma x)$ for all $x\in X$,
$\gamma \in G_{\rho(x)}$, $\gamma' \in G_{r(\gamma)}$.
\begin{proposition} \label{proposition:action}
  Let $(\rho,\mu)$ be an action of $G$ on a locally compact
  space $X$, where $\rho$ is w-quasi-continuous. Then
  $(\rho,\mu)$ extends to an action $(\tilde \rho,\tilde
  \mu)$ of $G$ on $\frakH X$.
\end{proposition}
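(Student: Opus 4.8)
The plan is to obtain $\tilde\rho$ from the extension theory for w-quasi-continuous maps and to define $\tilde\mu$ as a \emph{setwise} action $\gamma A=\{\gamma w\mid w\in A\}$, the real work being to show that this set is again a limit set and that the resulting map is continuous. First, since $\rho$ is w-quasi-continuous it extends to a continuous $\tilde\rho\colon\frakH X\to G^{0}$, as explained before Lemma~\ref{lemma:continuous}. I record that $\tilde\rho(A)=u$ forces $A\subseteq X^{u}:=\rho^{-1}(u)$: writing $A=\lim_{\nu}x_{\nu}$ for a primitive net, each $w\in A$ satisfies $\rho(w)=\lim_{\nu}\rho(x_{\nu})=u$ since $\rho$ is continuous and $G^{0}$ is Hausdorff. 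Hence for $(\gamma,A)\in G {_{s}\times_{\tilde\rho}} \frakH X$ the condition $s(\gamma)=\tilde\rho(A)$ gives $\rho(w)=s(\gamma)$ for all $w\in A$, so $\gamma A:=\{\gamma w\mid w\in A\}$ makes sense, and I would set $\tilde\mu(\gamma,A):=\gamma A$.

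The crucial step, and the main obstacle, is the following net computation. Let $(x_{\nu})_{\nu}$ be primitive with $\{x_{\nu}\}\to A$ in $\frakH X$, i.e.\ $A=\lim_{\nu}x_{\nu}$, and let $\gamma_{\nu}\to\gamma$ in $G$ with $s(\gamma_{\nu})=\rho(x_{\nu})$, so each $\gamma_{\nu}x_{\nu}$ is defined. I claim $(\gamma_{\nu}x_{\nu})_{\nu}$ is primitive with limit set $\gamma A$; granting this, $\{\gamma_{\nu}x_{\nu}\}\to\gamma A$ and in particular $\gamma A\in\frakH X$. For $\gamma A\subseteq\lim_{\nu}\gamma_{\nu}x_{\nu}$, fix $w\in A$; then $(\gamma_{\nu},x_{\nu})\to(\gamma,w)$ in $G {_{s}\times_{\rho}} X$, so continuity of $\mu$ yields $\gamma_{\nu}x_{\nu}\to\gamma w$. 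For the reverse inclusion and primitivity, let $z$ be a cluster point, realised along a subnet $\gamma_{\nu_{i}}x_{\nu_{i}}\to z$. Then $\rho(z)=\lim_{i}r(\gamma_{\nu_{i}})=r(\gamma)$; since inversion is continuous, $\gamma_{\nu_{i}}^{-1}\to\gamma^{-1}$, and applying $\mu$ to $(\gamma_{\nu_{i}}^{-1},\gamma_{\nu_{i}}x_{\nu_{i}})\to(\gamma^{-1},z)$, together with $\gamma_{\nu_{i}}^{-1}(\gamma_{\nu_{i}}x_{\nu_{i}})=s(\gamma_{\nu_{i}})x_{\nu_{i}}=x_{\nu_{i}}$, gives $x_{\nu_{i}}\to\gamma^{-1}z$. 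Thus $\gamma^{-1}z$ is a cluster point of $(x_{\nu})$, hence lies in $\lim_{\nu}x_{\nu}=A$ by primitivity, so $z=\gamma(\gamma^{-1}z)\in\gamma A$. This gives $\clus_{\nu}\gamma_{\nu}x_{\nu}\subseteq\gamma A\subseteq\lim_{\nu}\gamma_{\nu}x_{\nu}$, forcing equality and primitivity. To produce the lifts $\gamma_{\nu}$ in the first place, note $\rho(x_{\nu})\to s(\gamma)$ by w-quasi-continuity, so openness of $s$ yields, after passing to a subnet (still primitive with limit set $A$), a net $\gamma_{\nu}\to\gamma$ with $s(\gamma_{\nu})=\rho(x_{\nu})$. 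Hence $\gamma A\in\frakH X$ and $\tilde\mu$ is well defined.

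For continuity I would invoke Lemma~\ref{lemma:continuous} with $Z=G {_{s}\times_{\tilde\rho}} \frakH X$, target $\frakH X$, and dense subset $Z_{0}=G {_{s}\times_{\rho}} X$ embedded via $x\mapsto\{x\}$; density is precisely the lifting argument above. On $Z_{0}$ the map $\tilde\mu$ agrees with $(\gamma,x)\mapsto\{\gamma x\}$. If $(\gamma,A)$ is a limit point in $Z$ of a net $(\gamma_{\nu},\{x_{\nu}\})$ in $Z_{0}$, then $\gamma_{\nu}\to\gamma$, the net $(x_{\nu})$ is primitive with $\lim_{\nu}x_{\nu}=A$, and $s(\gamma_{\nu})=\rho(x_{\nu})$, so the computation gives $\tilde\mu(\gamma_{\nu},\{x_{\nu}\})=\{\gamma_{\nu}x_{\nu}\}\to\gamma A=\tilde\mu(\gamma,A)$. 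By Lemma~\ref{lemma:continuous}, $\tilde\mu$ is continuous.

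Finally, the action axioms reduce to setwise identities inherited from $(\rho,\mu)$. For $u=\tilde\rho(A)$ one has $uw=\rho(w)w=w$ for each $w\in A$, so $\tilde\rho(A)A=A$; from $\rho(\gamma w)=r(\gamma)$ for all $w\in A$ we get $\gamma A\subseteq X^{r(\gamma)}$ and hence $\tilde\rho(\gamma A)=r(\gamma)$; and $(\gamma'\gamma)w=\gamma'(\gamma w)$ for each $w$ yields $(\gamma'\gamma)A=\gamma'(\gamma A)$ whenever $s(\gamma')=r(\gamma)$ and $s(\gamma)=\tilde\rho(A)$. Together with the continuity of $\tilde\rho$ and $\tilde\mu$, this shows that $(\tilde\rho,\tilde\mu)$ is an action of $G$ on $\frakH X$ extending $(\rho,\mu)$.
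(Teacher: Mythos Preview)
Your proof is correct and follows essentially the same approach as the paper: extend $\rho$ by w-quasi-continuity, define $\tilde\mu$ setwise, show $\gamma A$ is the limit set of a primitive net built from lifts $\gamma_{\nu}\to\gamma$ (obtained via openness of $s$) using the inverse trick $\gamma^{-1}z\in\clus_{\nu}x_{\nu}=A$, and then invoke Lemma~\ref{lemma:continuous} with the dense subset $G {_{s}\times_{\rho}} X$ for continuity. The only cosmetic differences are that the paper builds an explicit doubly-indexed net over pairs $(U,\nu)$ rather than phrasing this as ``pass to a subnet,'' and that you spell out the action axioms while the paper leaves them implicit.
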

\begin{proof}
  By assumption, $\rho$ extends to a continuous map $\tilde
  \rho \colon \frakH X \to G^{0}$, and we only need to show
  that the formula $(\gamma,A) \mapsto \gamma A$ defines a
  continuous map $\tilde \mu \colon G {_{s}\times_{\tilde
      \rho}} \frakH X \to \frakH X$.  Let $(\gamma,A) \in G
  {_{s}\times_{\tilde \rho}} \frakH X$, where $A$ is the
  limit set of a primitive net $(x_{\nu})_{\nu}$ in $X$.
  Denote by $I$ the set of pairs $(U,\nu)$ such that $U$ is
  a neighbourhood of $\gamma$ and $\tilde \rho(x_{\nu'}) \in
  s(U)$ whenever $\nu' \geq \nu$, and equip $I$ with an
  order such that $(U,\nu) \geq (U',\nu')$ if and only if $U
  \subseteq U'$ and $\nu \geq \nu'$. For each $(U,\nu) \in
  I$, choose $\gamma_{(U,\nu)} \in U$ such that
  $s(\gamma_{(U,\nu)}) = \rho(x_{\nu})$. Then
  $\gamma=\lim_{(U,\nu)} \gamma_{(U,\nu)}$ and hence $\gamma
  A \subseteq \lim_{(U,\nu)} \gamma_{(U,\nu)}x_{\nu}$. If
  $x' \in \clus_{(U,\nu)} \gamma_{(U,\nu)}x_{\nu}$, then
  $\rho(x') = \lim_{\nu} r(\gamma_{(U,\nu)}) = r(\gamma)$
  and $\gamma^{-1}x' \in \clus_{(U,\nu)}
  \gamma_{(U,\nu)}^{-1}\gamma_{(U,\nu)} x_{\nu} =
  \clus_{\nu} x_{\nu} = A$, whence $x' \in \gamma A$. Thus,
  the net $(\gamma_{(U,\nu)}x_{\nu})_{(U,\nu)}$ is primitive
  and $\gamma A \in \frakH X$.  Therefore, the map $\tilde
  \mu$ is well defined. The argument above shows that the
  subset $G {_{s}\times_{\rho} X} \subseteq G
  {_{s}\times_{\tilde \rho}} \frakH X$ is dense. If
  $((\gamma_{\nu},x_{\nu}))_{\nu}$ is a net in $G
  {_{s}\times_{\rho} X}$ that converges to some $(\gamma,A)
  \in G {_{s}\times_{\tilde \rho}} \frakH X$, then the net
  $(x_{\nu})_{\nu}$ is primitive and a similar argument as
  above shows that the net $(\gamma_{\nu}x_{\nu})_{\nu}$ is
  primitive and converges to $\gamma A$. By Lemma
  \ref{lemma:continuous}, $\tilde \mu$ is continuous.
\end{proof}

\begin{theorem} \label{theorem:groupoid} The space $\frakH
  G$ carries the structure of a locally compact Hausdorff
  groupoid  such that the unit
  space $(\frakH G)^{0}$ is the closure of $G^{0}$ in
  $\frakH G$ and the range map $\frakr$, the source map
  $\fraks$, the multiplication $\frakm$ and the inversion
  $\fraki$ are given by
  \begin{align*}
    \frakr(A) &= AA^{-1}, & \fraks( A)
    &= A^{-1}A, & \frakm((A,B)) &= AB, &
    \fraki(A) &= A^{-1} &\text{for all } A,B \in
    \frakH G.
  \end{align*}
\end{theorem}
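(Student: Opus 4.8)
The plan is to obtain all topological properties for free from Fell's construction and to reduce the algebraic groupoid axioms to the density of the singletons $\{\gamma\}$, $\gamma \in G$, in $\frakH G$. Since $\frakK G$ is compact, i.e.\ quasi-compact and Hausdorff, its subset $\frakH G$ (the complement of at most the point $\emptyset$) is locally compact and Hausdorff, so the space itself needs no separate work. Inversion comes from functoriality: $i \colon G \to G$, $\gamma \mapsto \gamma^{-1}$, is a proper homeomorphism with $i \circ i = \mathrm{id}$, so $\fraki := \frakH i$ is a homeomorphism of $\frakH G$ with $\fraki(A) = i(A) = A^{-1}$. A basic structural fact drives everything else: if $A = \lim_{\nu} \gamma_{\nu}$ for a primitive net and $\gamma \in A$, then $r(\gamma_{\nu}) \to r(\gamma)$, and since $G^{0}$ is Hausdorff this value is independent of $\gamma$; thus $r$ and $s$ are constant on $A$, are w-quasi-continuous, and extend to continuous maps $\tilde r, \tilde s \colon \frakH G \to G^{0}$. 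In particular each $A$ lies in a single fibre $\{g : r(g) = \tilde r(A),\, s(g) = \tilde s(A)\}$, so whenever $\tilde s(A) = \tilde r(B)$ the set product $AB = \{\alpha\beta : \alpha \in A,\, \beta \in B\}$ is full.

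The heart of the proof is to show that $(A,B) \mapsto AB$ is well defined into $\frakH G$ and continuous on $\{(A,B) : \tilde s(A) = \tilde r(B)\}$; its restriction to the composable pairs $\fraks(A) = \frakr(B)$ will be $\frakm$. The argument is a two-net version of the proof of Proposition~\ref{proposition:action}. Writing $A = \lim_{\nu} \alpha_{\nu}$ and $B = \lim_{\mu} \beta_{\mu}$ with $\tilde s(A) = \tilde r(B) = u$, the values $s(\alpha_{\nu})$ and $r(\beta_{\mu})$ both tend to $u$ but need not coincide; using that $r$ is open, I would replace the $\beta_{\mu}$ along a combined directed set by points $\beta$ in arbitrarily small neighbourhoods with $r(\beta) = s(\alpha_{\nu})$ exactly, obtaining an honest product net. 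The main obstacle is to verify that this net is primitive with limit set exactly $AB$: the inclusion $AB \subseteq \lim$ is direct, while for the reverse one checks that any cluster point $x'$ of the product net has $r(x') = \tilde r(A)$ and that $\alpha^{-1} x'$ is a cluster point of the $\beta$-net for a suitable $\alpha \in A$, hence lies in $B$, so $x' \in AB$; this mirrors the cluster/limit computation in the proof of Proposition~\ref{proposition:action}. Continuity then follows from Lemma~\ref{lemma:continuous}, since the pairs $(\{\gamma\}, \{\delta\})$ with $s(\gamma) = r(\delta)$ are dense and $(\{x_{\nu}\})_{\nu} \to A$ precisely when $(x_{\nu})_{\nu}$ is primitive with $\lim_{\nu} x_{\nu} = A$.

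With multiplication in hand I would define $\frakr(A) := \frakm(A, A^{-1}) = AA^{-1}$ and $\fraks(A) := \frakm(A^{-1}, A) = A^{-1}A$, which are continuous as composites of $\fraki$ and $\frakm$ (the pairs $(A,A^{-1})$ and $(A^{-1},A)$ always satisfy the matching condition). For the unit space, the singletons $\{u\}$ with $u \in G^{0}$ are fixed by $\frakr$, so $G^{0} \subseteq (\frakH G)^{0}$, and $(\frakH G)^{0}$ is closed because $\frakH G$ is Hausdorff, giving $\overline{G^{0}} \subseteq (\frakH G)^{0}$; conversely, continuity of $\frakr$ together with $\{\gamma_{\nu}\} \to A$ shows that $\frakr(A)$ is the limit in $\frakH G$ of the points $\{r(\gamma_{\nu})\} \in G^{0}$, so $(\frakH G)^{0} = \overline{G^{0}}$. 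It remains to check the algebraic axioms. Each of them—associativity and the unit laws $\frakr(A)A = A = A\fraks(A)$—is an equation between continuous maps built from $\frakm, \fraki, \frakr, \fraks$ on a domain of composable tuples; on the dense set of composable tuples of singletons these reduce to the corresponding true identities in $G$, so by continuity and Hausdorffness they hold throughout $\frakH G$. This density reduction is exactly what makes the otherwise non-formal identity $AA^{-1}A = A$ hold for all limit sets, and it is the role of the openness of $r$ and $s$ to guarantee that the relevant composable tuples of singletons are indeed dense.
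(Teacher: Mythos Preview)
Your proposal is correct and shares the key technical ingredient with the paper---namely, that if $(\gamma_\lambda,\delta_\lambda)$ is a net of composable pairs with both coordinates primitive and limit sets $A,B$, then $(\gamma_\lambda\delta_\lambda)$ is primitive with limit $AB$ (the paper isolates this as Lemma~\ref{lemma:multiplication}, and your cluster-point calculation $\alpha^{-1}x'\in\clus\delta_\lambda\subseteq B$ is exactly its proof). Where you diverge is in how this lemma is deployed. You run a uniform density argument: define $(A,B)\mapsto AB$ on the larger domain $\{\tilde s(A)=\tilde r(B)\}$, obtain well-definedness and continuity in one stroke from Lemma~\ref{lemma:continuous} and the density of composable singletons (which you correctly attribute to openness of $r,s$), and then verify every groupoid identity by the same ``true on singletons, extend by continuity'' principle. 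The paper instead first extracts explicit algebraic identities $A=AA^{-1}A=xA^{-1}A$ and $AB=xyB^{-1}B$ (Lemma~\ref{lemma:subgroup}), extends the left multiplication action of $G$ on itself to an action $\tilde m$ on $\frakH G$ via Proposition~\ref{proposition:action}, and then proves continuity of $\frakm$ by the factorisation $A_\nu B_\nu=\tilde m\bigl(x_\nu^U y_\nu^V,\,B_\nu^{-1}B_\nu\bigr)$ rather than by Lemma~\ref{lemma:continuous}. Your route is cleaner and more uniform; the paper's route makes the coset-like structure $A=xA^{-1}A$ explicit, and the action $\tilde m$ it builds along the way is reused later (Propositions~\ref{proposition:transformation} and~\ref{proposition:module}). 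One small point: your ``two-net'' adjustment of the $\beta_\mu$ is really just the density of composable singletons followed by Lemma~\ref{lemma:multiplication} on the resulting single net---phrased that way, there is no separate need to verify that the adjusted $\beta$-net remains primitive with limit $B$.
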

The proof involves several lemmas:
\begin{lemma} \label{lemma:multiplication}
  Let $((x_{\nu},y_{\nu}))_{\nu}$ be a net in $\GsrG$ such that
  $(x_{\nu})_{\nu}$ and $(y_{\nu})_{\nu}$ are primitive with
  limit sets $A,B \in \frakH G$. Then the net
  $(x_{\nu}y_{\nu})_{\nu}$ is primitive with limit set $AB$.
\end{lemma}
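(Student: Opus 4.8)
The plan is to prove the two set inclusions
\[
  AB \subseteq \lim\nolimits_{\nu} x_{\nu}y_{\nu}
  \qquad\text{and}\qquad
  \clus\nolimits_{\nu} x_{\nu}y_{\nu} \subseteq AB .
\]
Since $\lim\nolimits_{\nu} x_{\nu}y_{\nu} \subseteq \clus\nolimits_{\nu} x_{\nu}y_{\nu}$ holds for every net, these two inclusions sandwich everything into the equalities $\lim\nolimits_{\nu} x_{\nu}y_{\nu} = \clus\nolimits_{\nu} x_{\nu}y_{\nu} = AB$, which is precisely the assertion that $(x_{\nu}y_{\nu})_{\nu}$ is primitive with limit set $AB$. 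Before starting I would record one observation used throughout: if $a \in A$ and $b \in B$, then $x_{\nu} \to a$ and $y_{\nu} \to b$, so $s(x_{\nu}) \to s(a)$ and $r(y_{\nu}) \to r(b)$ by continuity of $s$ and $r$; as $s(x_{\nu}) = r(y_{\nu})$ for all $\nu$ and $G^{0}$ is Hausdorff, this forces $s(a) = r(b)$. Hence every element of $A$ is composable with every element of $B$, and $AB = \{ab \mid a \in A,\ b \in B\}$ is a well-defined non-empty subset of $G$.

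The first inclusion is the easy one and comes directly from continuity of the multiplication. Given $a \in A$ and $b \in B$ we have $x_{\nu} \to a$ and $y_{\nu} \to b$ (convergence in the sense of limit points), so $(x_{\nu},y_{\nu}) \to (a,b)$ in $\GsrG$; applying the continuous multiplication map yields $x_{\nu}y_{\nu} \to ab$, whence $ab \in \lim\nolimits_{\nu} x_{\nu}y_{\nu}$.

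For the second inclusion, let $z \in \clus\nolimits_{\nu} x_{\nu}y_{\nu}$ and pass to a subnet along which $x_{\nu}y_{\nu} \to z$. The main obstacle is to manufacture a factorization $z = ab$ with $a \in A$ and $b \in B$, and the key ingredient is local compactness of $G$: fixing any $b_{0} \in B$ (which exists since $B \neq \emptyset$) and a compact neighbourhood $Q$ of $b_{0}$, the net $(y_{\nu})_{\nu}$ is eventually in $Q$ because $b_{0} \in \lim\nolimits_{\nu} y_{\nu}$; therefore the subnet $(y_{\nu})$ has a cluster point $b \in Q$, and I pass to a further subnet with $y_{\nu} \to b$ while still $x_{\nu}y_{\nu} \to z$. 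By primitivity of $(y_{\nu})_{\nu}$ we get $b \in \clus\nolimits_{\nu} y_{\nu} = B$. I would then exploit the groupoid identity $x_{\nu} = (x_{\nu}y_{\nu})y_{\nu}^{-1}$: continuity of inversion gives $y_{\nu}^{-1} \to b^{-1}$, and continuity of multiplication applies because the source/range conditions line up ($s(x_{\nu}y_{\nu}) = s(y_{\nu})$, and $s(z) = s(b)$ by the Hausdorffness argument above), yielding $x_{\nu} \to zb^{-1} =: a$. Consequently $a \in \clus\nolimits_{\nu} x_{\nu} = A$ by primitivity of $(x_{\nu})_{\nu}$, and $ab = zb^{-1}b = z\,s(b) = z$, exhibiting $z \in AB$.

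I expect the extraction of the cluster point $b$ to be the step requiring the most care: this is exactly where local compactness of $G$ and the non-emptiness of the limit set $B$ enter, and one must make sure to keep $x_{\nu}y_{\nu} \to z$ along the successive subnets. The remaining points — matching the source and range conditions so that the continuity arguments are legitimate, and the routine groupoid cancellations — are straightforward once the Hausdorffness of $G^{0}$ is invoked.
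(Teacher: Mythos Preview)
Your proof is correct, but the paper reaches the key inclusion $\clus_{\nu} x_{\nu}y_{\nu}\subseteq AB$ by a shorter route that avoids the compactness extraction. Instead of producing $b$ first, the paper fixes any $a\in A$ and uses that $a$ is a \emph{limit} point of $(x_{\nu})_{\nu}$: along the subnet with $x_{\nu}y_{\nu}\to z$ one still has $x_{\nu}\to a$, hence $y_{\nu}=x_{\nu}^{-1}(x_{\nu}y_{\nu})\to a^{-1}z$, so $a^{-1}z\in\clus_{\nu}y_{\nu}=B$ and $z=a(a^{-1}z)\in AB$. The whole argument is compressed into the single chain
\[
  \clus_{\nu} x_{\nu}y_{\nu}\;\subseteq\;AA^{-1}\clus_{\nu} x_{\nu}y_{\nu}\;\subseteq\;A\,\clus_{\nu} x_{\nu}^{-1}x_{\nu}y_{\nu}\;=\;A\,\clus_{\nu} y_{\nu}\;=\;AB\;\subseteq\;\lim_{\nu} x_{\nu}y_{\nu}.
\]
Your argument arrives at the same factorisation $z=ab$ from the other side (first $b$, then $a=zb^{-1}$), but the local-compactness step you flag as ``the step requiring the most care'' is actually dispensable: limit points, unlike mere cluster points, persist under passage to subnets, so one can cancel with a fixed $a\in A$ directly. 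The paper's version thus uses only continuity of the structure maps and Hausdorffness of $G^{0}$, while yours brings in local compactness of $G$ as an extra ingredient that is available here but not needed.
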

\begin{proof}
  Clearly, $\clus_{\nu} x_{\nu}y_{\nu} \subseteq
  AA^{-1}\clus_{\nu} x_{\nu}y_{\nu} \subseteq A \clus_{\nu}
  x^{-1}_{\nu}x_{\nu}y_{\nu} =A \clus_{\nu} y_{\nu} = AB
  \subseteq \lim_{\nu} x_{\nu} y_{\nu}$.
\end{proof}
\begin{lemma} \label{lemma:subgroup}
  \begin{enumerate}
  \item $A=AA^{-1}A$ and $A=xA^{-1}A$ for each $A \in \frakH
    G$ and $x \in A$.
  \item $AB=xyB^{-1}B$ for all $A,B \in \frakH G$, $x \in A,
    y \in B$ satisfying $A^{-1}A=BB^{-1}$.
  \end{enumerate}
\end{lemma}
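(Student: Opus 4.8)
The plan is to reduce both parts to Lemma~\ref{lemma:multiplication} together with a single structural observation: the range and source maps are constant on every limit set. To establish this, fix $A = \lim_{\nu} x_{\nu} \in \frakH G$ with $(x_{\nu})_{\nu}$ primitive, and let $x \in A$. For any neighbourhood $W$ of $r(x)$ in $G^{0}$, continuity of $r$ makes $r^{-1}(W)$ a neighbourhood of $x$; since $x$ is a limit point of $(x_{\nu})_{\nu}$, the net lies eventually in $r^{-1}(W)$, so $(r(x_{\nu}))_{\nu}$ lies eventually in $W$. Hence every value $r(x)$ with $x \in A$ is a limit point of $(r(x_{\nu}))_{\nu}$ in $G^{0}$, and as $G^{0}$ is Hausdorff a net has at most one limit point, so $r$ is constant on $A$. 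Applying this to $A^{-1} = \lim_{\nu} x_{\nu}^{-1} \in \frakH G$ (inversion is a homeomorphism, hence preserves primitivity and limit sets) shows that $s$ is constant on $A$ as well; I write $r(A)$ and $s(A)$ for these values.

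For part~1, to obtain $A = AA^{-1}A$ I would apply Lemma~\ref{lemma:multiplication} twice to the net $(x_{\nu})_{\nu}$ itself: the pairs $(x_{\nu}, x_{\nu}^{-1})$ and then $(x_{\nu} x_{\nu}^{-1}, x_{\nu})$ are composable, the factor nets are primitive with limit sets $A$, $A^{-1}$ and $A$, and the resulting product net is literally $(x_{\nu} x_{\nu}^{-1} x_{\nu})_{\nu} = (x_{\nu})_{\nu}$, with limit set $A$; comparing gives $A = AA^{-1}A$. For the second identity, $\{x\} \subseteq A$ yields $xA^{-1}A \subseteq AA^{-1}A = A$, while for $y \in A$ the constancy of $r$ gives $r(x) = r(y)$, so $y = x x^{-1} y \in xA^{-1}A$; hence $A = xA^{-1}A$.

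For part~2 I would first extract composability from the hypothesis: the unit $x^{-1}x = s(A)$ lies in $A^{-1}A = BB^{-1}$, all of whose elements have source $r(B)$, which forces $s(A) = r(B)$, so that $xy$ and $AB$ are defined. Next I would show $AB = xB$: the inclusion $xB \subseteq AB$ is clear, and for $a \in A$, $b \in B$ one writes $ab = x(x^{-1}a)b$ with $x^{-1}a \in A^{-1}A = BB^{-1}$, whence $(x^{-1}a)b \in BB^{-1}B = B$ by part~1 and $ab \in xB$. Finally $yB^{-1}B = B$ by part~1 gives $xyB^{-1}B = x(yB^{-1}B) = xB$, and combining the two identities yields $AB = xyB^{-1}B$. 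I expect the structural observation of the first paragraph to be the main obstacle; once the constancy of $r$ and $s$ on limit sets is secured, the rest is bookkeeping of composability together with repeated use of Lemma~\ref{lemma:multiplication} and part~1, the only care being to check that each set product written above indeed consists of composable pairs, which that constancy guarantees.
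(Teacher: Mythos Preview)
Your proof is correct and follows the same approach as the paper: part~1 via Lemma~\ref{lemma:multiplication} applied to $(x_{\nu}x_{\nu}^{-1}x_{\nu})_{\nu}$ together with the identity $y=xx^{-1}y$, and part~2 by combining $A=xA^{-1}A$, the hypothesis $A^{-1}A=BB^{-1}$, and $B=yB^{-1}B$ (the paper compresses this into the single chain $AB=xA^{-1}AB=xBB^{-1}B=xyB^{-1}B$). Your preliminary observation that $r$ and $s$ are constant on each limit set is a composability check that the paper leaves tacit; it is not a different route but simply a more careful version of the same argument.
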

\begin{proof}
1. If $A=\lim_{\nu} x_{\nu}$ for a primitive net
$(x_{\nu})_{\nu}$ in $G$, then
  $AA^{-1}A = \lim_{\nu} x_{\nu}x_{\nu}^{-1}x_{\nu} =
  \lim_{\nu} x_{\nu} = A$ by the  lemma above, and if $x,y \in
  A$, then $y =xx^{-1}y \in xA^{-1}A$.

2. By 1., $AB=xA^{-1}AB= xBB^{-1}B=xyB^{-1}B$.
\end{proof}
\begin{lemma} \label{lemma:w-quasi}
  The range and the source map of $G$ are w-quasi-continuous.
\end{lemma}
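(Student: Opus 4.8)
The plan is to reduce the statement to the ordinary continuity of $r$ and $s$ together with the separation of $G^{0}$, and to argue only for $r$ since $s$ is treated identically. Recall that w-quasi-continuity of $r$ requires nothing more than this: for every primitive net $(x_{\nu})_{\nu}$ in $G$ whose limit set $A=\lim_{\nu}x_{\nu}$ is non-empty, the image net $(r(x_{\nu}))_{\nu}$ converges in $G^{0}$. So I would fix such a net and produce a limit for $(r(x_{\nu}))_{\nu}$.

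The core of the argument is a single observation. Since $A\neq\emptyset$, I would pick a point $x\in A$. By definition of the limit set, $x$ is a point to which the whole net $(x_{\nu})_{\nu}$ converges, i.e.\ the net is eventually inside every neighbourhood of $x$ (not merely frequently). Because $r$ is continuous, the preimage under $r$ of any neighbourhood of $r(x)$ is a neighbourhood of $x$, so $(r(x_{\nu}))_{\nu}$ is eventually inside every neighbourhood of $r(x)$; that is, $(r(x_{\nu}))_{\nu}$ converges to $r(x)$. This already gives convergence, which is everything w-quasi-continuity asks for, and the same reasoning applied to $s$ completes the proof.

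The only place where the hypotheses genuinely enter is the Hausdorffness of $G^{0}$. In the non-Hausdorff groupoid $G$ the limit set $A$ may contain several points, and a priori different choices of $x\in A$ could yield different values $r(x)$; but each such $r(x)$ lies in $\lim_{\nu}r(x_{\nu})$, and in the Hausdorff space $G^{0}$ this limit set is a singleton. Hence $r$ is in fact constant on $A$ and $\lim_{\nu}r(x_{\nu})$ does not depend on the choice of $x$. I would record this constancy explicitly, since it is exactly what makes the induced extension $\tilde r\colon\frakH G\to G^{0}$ well defined and is needed when identifying $\frakr$ in Theorem~\ref{theorem:groupoid}.

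I do not expect any real obstacle: neither primitivity of the net nor local compactness is used, only continuity of $r,s$ and the separation of $G^{0}$. (Note in passing that the non-emptiness of the limit set is essential, which is why one obtains only the \emph{weak} version: a primitive net with empty limit set can escape to infinity with $(r(x_{\nu}))_{\nu}$ failing to converge, so $r$ and $s$ are in general not quasi-continuous.) The one subtlety to watch is the precise reading of ``limit point'': membership $x\in\lim_{\nu}x_{\nu}$ must mean that the entire net converges to $x$, for it is this—rather than mere clustering along a subnet—that licenses applying continuity to the full net.
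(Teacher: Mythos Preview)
Your argument is correct: once you fix $x\in A=\lim_{\nu}x_{\nu}$, continuity of $r$ immediately gives $r(x_{\nu})\to r(x)$ in $G^{0}$, and Hausdorffness of $G^{0}$ makes this limit independent of the choice of $x$. That is all that w-quasi-continuity demands, and you are right that primitivity of $(x_{\nu})_{\nu}$ plays no role in this step.

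The paper's proof reaches the same conclusion by a slightly less direct route: it writes $r(x_{\nu})=x_{\nu}x_{\nu}^{-1}$ and invokes Lemma~\ref{lemma:multiplication} to deduce that $(r(x_{\nu}))_{\nu}$ is itself a \emph{primitive} net in $G$ with limit set $AA^{-1}$, and only then appeals to continuity of $r$ to see that this limit set meets $G^{0}$. The extra ingredient this buys is the identification of the limit set of $(r(x_{\nu}))_{\nu}$ \emph{as a subset of $G$}, namely $AA^{-1}$, rather than merely its unique limit point in $G^{0}$. That identification is exactly what is used in Theorem~\ref{theorem:groupoid} to justify the formula $\frakr(A)=AA^{-1}$ for the range map of $\frakH G$. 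Your argument yields the constancy of $r$ on $A$ and hence the extension $\tilde r\colon\frakH G\to G^{0}$, but not by itself the description of $\frakr(A)$ as an element of $\frakH G$; for that one still needs Lemma~\ref{lemma:multiplication} somewhere. So: your proof of the lemma as stated is cleaner and more elementary, while the paper's detour through Lemma~\ref{lemma:multiplication} is doing double duty for the theorem that follows.
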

\begin{proof}
  Let $(x_{\nu})_{\nu}$ be a primitive net in $G$ with
  non-empty limit set. By Lemma \ref{lemma:multiplication},
  the nets $(r(x_{\nu}))_{\nu} =
  (x_{\nu}x_{\nu}^{-1})_{\nu}$ and $(s(x_{\nu}))_{\nu} =
  (x_{\nu}^{-1}x_{\nu})_{\nu}$ are primitive in $G$ and
  therefore also in $G^{0}$, and they have limit points in
  $G^{0}$ because $r,s$ are continuous.
\end{proof}
The range map $r \colon G \to G^{0}$ and the multiplication
map $m \colon \GsrG \to G$ form an action on $G$. By
Lemma \ref{lemma:w-quasi} and Proposition
\ref{proposition:action}, it extends to an action
$(\tilde r,\tilde m)$ on $\frakH G$.
 \begin{proof}[Proof of Theorem \ref{theorem:groupoid}]
   By Lemma \ref{lemma:multiplication} and
   \ref{lemma:continuous}, the maps $\frakr,\fraks,\fraki$
   are well defined and continuous. The map $\frakm$ is well
   defined because $AB=xB = \tilde m(x,B) \in \frakH G$ for
   all $(A,B) \in \frakH G {_{\fraks}\times_{\frakr}} \frakH
   G$ and $x \in A$. Equipped with these maps,
   $\frakH G$ becomes a groupoid, as one can easily check
   using Lemma \ref{lemma:subgroup}. We show that
   the map $\frakm$ is continuous.  Assume that
   $((A_{\nu},B_{\nu}))_{\nu}$ is a net in $\frakH G
   {_{\fraks}\times_{\frakr}} \frakH G$ that converges to
   some point $(A,B)$. Choose $x \in A$ and $y \in B$.  If
   $U,V$ are neighbourhoods of $x,y$, then ${\cal
     U}_{U},{\cal U}_{V}$ are neighbourhoods of $A,B$ and
   hence there exists some $\nu$ such that $A_{\nu} \in
   {\cal U}_{U}$, $B_{\nu} \in {\cal U}_{V}$, that is,
   $A_{\nu} \cap U \neq \emptyset$, $B_{\nu} \cap V \neq
   \emptyset$.  Denote by $I$ the set of all triples
   $(U,V,\nu)$, where $U,V$ are neighbourhoods of $x,y$ and
   $A_{\nu} \cap U \neq 0$, $B_{\nu} \cap V \neq 0$. Order
   $I$ such that $(U,V,\nu) \geq (U',V',\nu')$ if and only
   if $U \subseteq U'$, $V \subseteq V'$, $\nu \geq \nu'$. Let
   $(U,V,\nu) \in I$ and choose $x_{\nu}^{U} \in A_{\nu}
   \cap U$, $y_{\nu}^{V} \in B_{\nu} \cap V$.  The nets
   $(x_{\nu}^{U}y_{\nu}^{V})_{(U,V,\nu)}$ and
   $(B_{\nu}^{-1}B_{\nu})_{(U,V,\nu)}$ converge to $xy$ and
   $B^{-1}B$, respectively, and hence $A_{\nu}B_{\nu} =
   \tilde m(x_{\nu}^{U}y_{\nu}^{V}, B_{\nu}^{-1}B_{\nu})$
   converges to $\tilde m(xy,B^{-1}B) = AB$ in $\frakH G$.
\end{proof}
We call an open subset $V \subseteq G$ a {\em $G$-set} if
$r(V) \subseteq G^{0}$ is open and $r|_{V} \colon V \to
r(V)$ is a homeomorphism. Recall that $G$ is \'etale if it
is covered by its open $G$-sets.
\begin{proposition}
  If $G$ is \'etale, then $\frakH G$ is \'etale.
\end{proposition}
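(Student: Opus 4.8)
The plan is to exhibit an explicit cover of $\frakH G$ by open $\frakH G$-sets. For an open $G$-set $V\subseteq G$, the set $\mathcal{U}_V=\{A\in\frakH G\mid A\cap V\neq\emptyset\}$ is open by the definition of the Fell topology, and these sets cover $\frakH G$: given $A\in\frakH G$, its limit set is non-empty, so I may pick $x\in A$; since $G$ is \'etale, $x$ lies in some open $G$-set $V$, whence $A\in\mathcal{U}_V$. It therefore suffices to show that for each open $G$-set $V$ the range map restricts to a homeomorphism of $\mathcal{U}_V$ onto an open subset of $(\frakH G)^0$. (I will use that the unit space of an \'etale groupoid is open in $G$, so that $r(V)$ is open in $G$; this follows because the unit inclusion is a continuous section of the local homeomorphism $r$.)

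The key structural observation is that every $A\in\frakH G$ lies in a single fibre. If $A=\lim_\nu x_\nu$ and $x,y\in A$, then $r(x)=\lim_\nu r(x_\nu)=r(y)$ and likewise $s(x)=s(y)$, because $r,s$ are continuous and $G^0$ is Hausdorff. Hence all elements of $A$ share one range $v$ and one source $u$, and $\frakr(A)=AA^{-1}$, $\fraks(A)=A^{-1}A$ consist of isotropy arrows at $v$ and $u$. In particular, if $V$ is a $G$-set, any two points of $A\cap V$ have the same range $v$ and are therefore equal, since $r|_V$ is injective; thus $A\cap V$ is a single point and I obtain a map $\eta_V\colon\mathcal{U}_V\to V$ sending $A$ to the unique point of $A\cap V$. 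A short argument with the subbasic sets $\mathcal{U}_O$ for open $O\subseteq V$ shows that $\eta_V$ is continuous.

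Next I would prove that $\frakr|_{\mathcal{U}_V}$ is injective with open image. The crucial recovery formula is $A=\frakr(A)\,\eta_V(A)$, understood as a set product in $G$; this follows from Lemma~\ref{lemma:subgroup} together with the fibre description above. If now $\frakr(A)=\frakr(A')$ for $A,A'\in\mathcal{U}_V$, then comparing the common range of the elements forces $A$ and $A'$ to have the same distinguished range $v$, so $\eta_V(A)=\eta_V(A')$ by injectivity of $r|_V$, and the recovery formula gives $A=A'$. The same formula identifies the image as $\frakr(\mathcal{U}_V)=\{U\in(\frakH G)^0\mid U\cap r(V)\neq\emptyset\}=\mathcal{U}_{r(V)}\cap(\frakH G)^0$, which is open. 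To see that every such $U$ is attained, let $v$ be the unique point of $U\cap r(V)$, put $x=(r|_V)^{-1}(v)$, and choose a primitive net of units $u_\nu$ with $\lim_\nu u_\nu=U$; lifting through $r|_V$ gives $x_\nu=(r|_V)^{-1}(u_\nu)\in V$ with $x_\nu\to x$. Using compactness of $\frakK G$ to extract a convergent subnet, the multiplication Lemma~\ref{lemma:multiplication} applied to $u_\nu=x_\nu x_\nu^{-1}$, and the injectivity just established, one checks that $(x_\nu)_\nu$ is itself primitive with limit set $\frakr(A)\,x=A$, so that $\frakr(A)=U$.

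Finally, I would verify that the inverse of $\frakr|_{\mathcal{U}_V}$ is continuous. Given $U_\alpha\to U$ in the image, continuity of $\eta_{r(V)}$ yields convergence of the distinguished points and hence of the lifts $x_\alpha=(r|_V)^{-1}(\eta_{r(V)}(U_\alpha))\to x$; for an arbitrary subnet of the corresponding $A_\alpha$ I extract a limit $B$ in the compact space $\frakK G$, check $x\in B$ by testing against a quasi-compact neighbourhood of $x$ and the sets $\mathcal{U}^Q$, so that $B\in\mathcal{U}_V$ with $\frakr(B)=U$, whence $B=A$ by injectivity. As every subnet admits a further subnet converging to $A$, the net $A_\alpha$ converges to $A$, and $\mathcal{U}_V$ is an $\frakH G$-set. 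The main obstacle throughout is that the products occurring here, such as $\frakr(A)\,x$ and the lifts above, are set-theoretic products in $G$ rather than multiplications in the groupoid $\frakH G$ (the unit $\frakr(A)$ may properly contain the point $v$), so one cannot directly invoke continuity of $\frakm$ from Theorem~\ref{theorem:groupoid}; instead one must argue through primitive nets, the convergence criterion for $\frakK G$, and the injectivity of $\frakr$.
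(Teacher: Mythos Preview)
Your argument is correct, and it shares with the paper the idea of showing that each $\mathcal{U}_{V}$ (for $V$ an open $G$-set) is an $\frakH G$-set. The difference lies in how continuity of the inverse of $\frakr|_{\mathcal{U}_{V}}$ (the paper uses $\fraks$, but that is symmetric) is obtained. You establish surjectivity and continuity of the inverse by extracting convergent subnets in the compact space $\frakK G$ and then pinning down the limit via your recovery formula and the injectivity already proved. The paper instead writes the inverse explicitly as a composition of continuous maps: for $B\in\mathcal{U}_{s(V)}\cap(\frakH G)^{0}$ it sets
\[
\frakt(B)=\tilde m\bigl(s_{V}^{-1}(\tilde s(B)),\,B\bigr),
\]
where $\tilde s\colon\frakH G\to G^{0}$ is the continuous extension of $s$ coming from Lemma~\ref{lemma:w-quasi} and $\tilde m\colon G {_{s}\times_{\tilde r}} \frakH G\to\frakH G$ is the continuous $G$-action on $\frakH G$ from Proposition~\ref{proposition:action}. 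Continuity and the two inverse identities $\frakt(\fraks(A))=A$, $\fraks(\frakt(B))=B$ then follow in one line.

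Your closing remark that ``one cannot directly invoke continuity of $\frakm$'' is accurate but slightly misdirected: while the groupoid multiplication $\frakm$ on $\frakH G\times\frakH G$ is not what is needed, the action map $\tilde m$ of $G$ on $\frakH G$ is exactly the right tool, and it makes the subnet extraction unnecessary. What your approach buys is self-containment---it does not appeal to Proposition~\ref{proposition:action}---at the cost of a longer argument.
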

\begin{proof}
  Let $V \subseteq G$ be an open Hausdorff $G$-set and let
  $s_{V}:=s|_{V} \colon V \to s(V)$. We show that
  $\fraks({\cal U}_{V})={\cal U}_{s(V)} \cap (\frakH G)^{0}$
  and construct a continuous map $\frakt \colon {\cal
    U}_{s(V)} \cap (\frakH G)^{0} \to {\cal U}_{V}$ that is
  inverse to $\fraks$, and then the claim follows because
  sets of the form ${\cal U}_{V}$ cover $\frakH G$. First,
  $\fraks({\cal U}_{V}) \subseteq {\cal U}_{s(V)} \cap
  (\frakH G)^{0}$ because $A \cap V \neq \emptyset$ implies
  that $A^{-1}A \cap s(V) \neq \emptyset$ for each $A \in
  \frakH G$. Clearly, $\tilde s(B) \in s(V)$ for each $B \in
  {\cal U}_{s(V)}$.  The map $\frakt \colon {\cal U}_{s(V)}
  \cap (\frakH G)^{0} \to \frakH G$ given by $B \mapsto
  s_{V}^{-1}(\tilde s(B))B = \tilde m(s_{V}^{-1}(\tilde
  s(B)),B)$ is continuous because the maps
  $s_{V}^{-1},\tilde s,\tilde m$ are continuous, its image
  is contained in ${\cal U}_{V}$ because $s_{V}^{-1}(\tilde
  s(B)) \in s_{V}^{-1}(\tilde s(B)) B$ for each $B \in {\cal
    U}_{s(V)}$, and $\frakt(\fraks(A)) = (A \cap V) A^{-1}A
  = A$ and $\fraks(\frakt(B)) = B^{-1}B = B$ for each $A \in
  {\cal U}_{V}$, $B \in {\cal U}_{s(V)} \cap (\frakH
  G)^{0}$.
\end{proof}
The groupoid $\frakH G$ can be identified with the quotient
of the transformation groupoid \cite{paterson} associated to an adjoint
action of $G$ on $(\frakH G)^{0}$ as follows.
\begin{proposition} \label{proposition:transformation}
  \begin{enumerate}\item 
    There exists an action $(\tilde t,\tilde \Ad)$ of $G$ on
    $(\frakH G)^{0}$ such that $\tilde t(A) =\tilde
    r(A)=\tilde s(A)$ and $\tilde \Ad(x,A) = xAx^{-1}$ for
    all $A \in (\frakH G)^{0}$, $x \in G_{\tilde t(A)}$.
  \item The map $G \ltimes (\frakH G)^{0} \to
    \frakH G$ given by $(x,A) \mapsto xA$ is a continuous
    surjective groupoid homomorphism with kernel $N= \{(x,A)
    \in G \ltimes (\frakH G)^{0} \mid x\in A\}$.
  \item The induced map $(G \ltimes (\frakH G)^{0})/N \to
    \frakH G$ is an isomorphism of topological groupoids.
  \end{enumerate}
\end{proposition}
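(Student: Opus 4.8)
The plan is to build everything from the extended left action $(\tilde r,\tilde m)$ of Proposition~\ref{proposition:action} and from the fact that, since $(\frakH G)^{0}$ is the closure of $G^{0}$, every unit $A\in(\frakH G)^{0}$ is the limit set of a primitive net $(u_{\nu})_{\nu}$ in $G^{0}$. I will also repeatedly use the identities $A^{-1}=A$ and $AA=A$ valid for such units, both of which follow at once from Lemma~\ref{lemma:multiplication} applied to $(u_{\nu})_{\nu}$, since $u_{\nu}^{-1}=u_{\nu}=u_{\nu}u_{\nu}$.

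For part~1, I would first put $\tilde t:=\tilde r|_{(\frakH G)^{0}}$. Because $r=s=\mathrm{id}$ on $G^{0}$, the continuous maps $\tilde r$ and $\tilde s$ agree on the dense subset $G^{0}$ and hence, as $G^{0}$ is Hausdorff, on its closure $(\frakH G)^{0}$; thus $\tilde t=\tilde r|_{(\frakH G)^{0}}=\tilde s|_{(\frakH G)^{0}}$ is well defined and continuous. I would then realize conjugation as a composite of continuous maps: $xA=\tilde m(x,A)$ is the left action, while right translation $B\mapsto Bx^{-1}$ equals $\fraki\circ\tilde m(x,\cdot)\circ\fraki$, so that $\tilde\Ad(x,A):=(xA)x^{-1}$ is continuous on $G {_{s}\times_{\tilde t}} (\frakH G)^{0}$. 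To see that $\tilde\Ad(x,A)\in(\frakH G)^{0}$, I would write $A=\lim_{\nu}u_{\nu}$ with $u_{\nu}\in G^{0}$ and run the approximation of Proposition~\ref{proposition:action}: choosing $x_{(U,\nu)}\to x$ with $s(x_{(U,\nu)})=u_{\nu}$ gives $xA=\lim_{(U,\nu)}x_{(U,\nu)}$ and hence $xAx^{-1}=\lim_{(U,\nu)}x_{(U,\nu)}x_{(U,\nu)}^{-1}=\lim_{(U,\nu)}r(x_{(U,\nu)})$, a primitive net in $G^{0}$, whose limit set therefore lies in $(\frakH G)^{0}$. The action axioms $\tilde t(\tilde\Ad(x,A))=r(x)$, $\tilde\Ad(\tilde t(A),A)=A$, and $\tilde\Ad(x'x,A)=\tilde\Ad(x',\tilde\Ad(x,A))$ then follow from the corresponding properties of $\tilde m$ and $\fraki$.

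For part~2 I would view $G\ltimes(\frakH G)^{0}$ as the transformation groupoid of $\tilde\Ad$, with source $A$, range $xAx^{-1}$, and composition $(y,xAx^{-1})(x,A)=(yx,A)$. The map $\phi(x,A)=xA=\tilde m(x,A)$ is continuous as a restriction of $\tilde m$, and the description $xA=\lim_{(U,\nu)}x_{(U,\nu)}$ above yields $\fraks(xA)=\lim s(x_{(U,\nu)})=A$ and $\frakr(xA)=\lim r(x_{(U,\nu)})=xAx^{-1}$, so $\phi$ intertwines the structure maps. After substituting $B=xAx^{-1}$, the homomorphism identity $\phi((y,B)(x,A))=\phi(y,B)\phi(x,A)$ becomes $y(xA)=(y\cdot xAx^{-1})(xA)$, which I would check by exhibiting both sides as the limit set of $(y_{(\cdot)}x_{(U,\nu)})$ via Lemma~\ref{lemma:multiplication}. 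Surjectivity is Lemma~\ref{lemma:subgroup}.1: given $C\in\frakH G$, pick $x\in C$ and set $A=C^{-1}C=\fraks(C)$, so that $C=xC^{-1}C=xA$; here $s(x)=\tilde s(C)=\tilde t(A)$ because $s$ is continuous and $(s(c_{\nu}))_{\nu}$ has the unique limit $\tilde s(C)$ in the Hausdorff space $G^{0}$. Finally, $(x,A)\in N$ means $xA\in(\frakH G)^{0}$; since always $x=\lim_{(U,\nu)}x_{(U,\nu)}\in xA$ and $\fraks(xA)=A$, a unit value of $xA$ forces $xA=A$ and hence $x\in A$, while conversely $x\in A$ gives $xA=xA^{-1}A=A$ by Lemma~\ref{lemma:subgroup}.1. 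Thus $N=\{(x,A)\mid x\in A\}$, and one checks directly that $N$ is a normal subgroupoid contained in the isotropy bundle.

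For part~3, since $\phi$ preserves sources and has kernel $N$, the induced map $\bar\phi\colon(G\ltimes(\frakH G)^{0})/N\to\frakH G$ is a continuous bijective groupoid homomorphism: if $\phi(x,A)=\phi(x',A')$ then $A=A'$ (equal sources) and $x'^{-1}x\in A$ (since $(x'^{-1}x)A=x'^{-1}(x'A)=A$), so the two classes agree. The remaining and main difficulty is to show that $\bar\phi$ is a homeomorphism, i.e.\ that the inverse $\psi\colon C\mapsto[(x,C^{-1}C)]$ (for any $x\in C$) is continuous. I would prove this from the topology of $\frakH G$: given $C_{\nu}\to C$ and a fixed $x\in C$, each subbasic neighbourhood $\mathcal{U}_{V}$ of $C$ with $x\in V$ satisfies $C_{\nu}\cap V\neq\emptyset$ eventually, so after passing to a subnet one can select $x_{\nu}\in C_{\nu}$ with $x_{\nu}\to x$ in $G$; together with $\fraks(C_{\nu})\to\fraks(C)$ this gives $(x_{\nu},\fraks(C_{\nu}))\to(x,\fraks(C))$ in the transformation groupoid, whose images under the quotient map are $\psi(C_{\nu})$ and $\psi(C)$. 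By the subnet characterization of convergence this yields $\psi(C_{\nu})\to\psi(C)$. The delicate point, which I expect to be the main obstacle, is precisely this selection of convergent representatives $x_{\nu}\in C_{\nu}$ in the non-Hausdorff space $G$, and arranging the subnet bookkeeping so that it produces genuine convergence in the quotient rather than merely along subnets.
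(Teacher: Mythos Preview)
Your arguments for parts 1 and 2 are essentially the same as the paper's, with only cosmetic differences: the paper obtains continuity of $\tilde\Ad$ directly from Lemma~\ref{lemma:continuous} and Lemma~\ref{lemma:multiplication} (checking on the dense subset $G{_{s}\times_{s}}G^{0}$), while you factor $\tilde\Ad$ through $\tilde m$ and $\fraki$; both are fine. The homomorphism, surjectivity, and kernel computations match the paper's, which records them via the algebraic identities $\fraks(xA)=A^{-1}x^{-1}xA=A$, $\frakr(xA)=xAA^{-1}x^{-1}=xAx^{-1}$, and $xAx'A'=xx'A'$.

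For part 3 your route genuinely differs. You attack the continuity of the inverse $\psi$ by taking an arbitrary net $C_{\nu}\to C$ in $\frakH G$, then manufacturing representatives $x_{\nu}\in C_{\nu}$ with $x_{\nu}\to x$ along a subnet, and finally invoking the ``every subnet has a convergent sub-subnet'' criterion. This can be made to work (the construction is essentially the same $(V,\nu)$-indexed net used in the proof of Proposition~\ref{proposition:action}), but it is exactly the bookkeeping you flag as delicate. The paper sidesteps this entirely by appealing to Lemma~\ref{lemma:continuous}: since the image of $G$ is dense in $\frakH G$, it suffices to check continuity of $q=\psi$ on nets coming from $G$ itself. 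For a primitive net $(x_{\nu})_{\nu}$ in $G$ with limit set $A$ and any $x\in A$, one has $q(x_{\nu})=(x_{\nu},s(x_{\nu}))N$, and this visibly converges to $(x,A^{-1}A)N=q(A)$. So the paper's key simplification is to test continuity only against singleton nets, where the ``selection of representatives'' problem is vacuous.
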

\begin{proof}
  1. The only nontrivial assertion is continuity of the map
  $\tilde \Ad$. Since $s$ is open, $\GsrG^{0}$ is dense in
  $G {_{s}\rtimes_{\tilde t}} (\frakH G)^{0}$ (see the proof
  of Proposition \ref{proposition:action}), and continuity
  follows from Lemma \ref{lemma:continuous} and
  \ref{lemma:multiplication}.
  
  2. The map is a groupoid homomorphism because for each
  pair of composable elements $(x,A),(x',A')) \in G \ltimes
  (\frakH G)^{0}$, we have $\fraks(xA)=A^{-1}x^{-1}xA =
A$, $\frakr(x'A') =
  x'A'A'{}^{-1}x'{}^{-1}=x'A'x'{}^{-1}$, and
  $xAx'A'=xx'A'$. It is continuous because it is the
  restriction of $\tilde m$, and surjective because $A =
  xA^{-1}A$ for each $A \in \frakH G$ and $x \in A$.

  3. Denote by $q$ the inverse of the induced map. We have
  to show that $q$ is continuous and do so using Lemma
  \ref{lemma:continuous}. If $(x_{\nu})_{\nu}$ is a
  primitive net in $G$ with limit set $A$ and $x \in
  A$, then the net $(q(x_{\nu}))_{\nu} =
  ((x_{\nu},s(x_{\nu}))N)_{\nu}$ clearly converges to
  $q(A)=(x,A^{-1}A)N$.
\end{proof}
\begin{proposition}
  Let $(\rho,\mu)$ be an action of $G$ on a locally compact
  space $X$, where $\rho$ is quasi-continuous. Then
  $(\rho,\mu)$ extends to an action $(\hat \rho,\hat \mu)$
  of $\frakH G$ on $\frakH X$ such that $\hat
  \rho(B)=\rho(B)$ and $\hat \mu(A,B)=AB$ for all $A \in
  \frakH G$, $B \in \frakH X$.
\end{proposition}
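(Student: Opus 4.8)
The plan is to build $(\hat\rho,\hat\mu)$ on top of the $G$-action $(\tilde\rho,\tilde\mu)$ on $\frakH X$ furnished by Proposition~\ref{proposition:action} (available since quasi-continuity implies w-quasi-continuity) and the algebra of Lemma~\ref{lemma:subgroup}, in exact analogy with the way Theorem~\ref{theorem:groupoid} erects the groupoid structure of $\frakH G$ on the action $(\tilde r,\tilde m)$. I would first record the mixed analogue of Lemma~\ref{lemma:multiplication}: if $(x_\nu)_\nu$ is primitive in $G$ with limit set $A$ and $(y_\nu)_\nu$ is primitive in $X$ with limit set $B$ and $s(x_\nu)=\rho(y_\nu)$, then $(x_\nu y_\nu)_\nu$ is primitive in $X$ with limit set $AB$. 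Its proof is the same formal chain $\clus_\nu x_\nu y_\nu\subseteq AA^{-1}\clus_\nu x_\nu y_\nu\subseteq A\clus_\nu x_\nu^{-1}x_\nu y_\nu=A\clus_\nu y_\nu=AB\subseteq\lim_\nu x_\nu y_\nu$, where the first inclusion uses that every cluster point $z$ satisfies $z=\rho(z)z$ with $\rho(z)\in\clus_\nu r(x_\nu)=AA^{-1}$ by continuity of $\rho$ and Lemma~\ref{lemma:multiplication}, and $x_\nu^{-1}x_\nu y_\nu=\rho(y_\nu)y_\nu=y_\nu$ holds on the nose.

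The main step, and the only place where quasi-continuity is needed rather than the weaker hypothesis of Proposition~\ref{proposition:action}, is to show that $\hat\rho(B):=\rho(B)$, the limit set in $\frakH G$ of $(\rho(y_\nu))_\nu$ for a primitive net $(y_\nu)_\nu$ with limit set $B$, is a well-defined continuous map $\frakH X\to(\frakH G)^{0}$. Because $r,s$ are continuous and the $\rho(y_\nu)$ are units, every cluster point $g$ of $(\rho(y_\nu))_\nu$ lies in the isotropy group over $\tilde\rho(B)$, since $r(g),s(g)\in G^{0}$ are limits of the $G^{0}$-convergent net $(\rho(y_\nu))_\nu$ and hence equal $\tilde\rho(B)$. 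The content of quasi-continuity is to upgrade this mere $G^{0}$-convergence to primitivity of $(\rho(y_\nu))_\nu$ in $G$, i.e.\ to force $\clus_\nu\rho(y_\nu)=\lim_\nu\rho(y_\nu)$ even along nets with empty limit set in $X$; controlling these isotropy cluster points in the non-Hausdorff space $G$ is the chief obstacle I anticipate. Granting it, Lemma~\ref{lemma:multiplication} gives $\hat\rho(B)^{-1}=\hat\rho(B)=\hat\rho(B)^{-1}\hat\rho(B)$ (as $\rho(y_\nu)=\rho(y_\nu)^{-1}$), so $\hat\rho(B)\in(\frakH G)^{0}$, and continuity of $\hat\rho$ follows from Lemma~\ref{lemma:continuous} and the net description of convergence in $\frakK G$, since $\{y_\nu\}\to B$ then forces $\{\rho(y_\nu)\}\to\hat\rho(B)$.

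It remains to treat $\hat\mu(A,B)=AB$, and here everything parallels Theorem~\ref{theorem:groupoid}. For a composable pair $(A,B)$ and any $x\in A$ we have $s(x)\in\fraks(A)\cap G^{0}=\hat\rho(B)\cap G^{0}=\{\tilde\rho(B)\}$, so $s(x)=\tilde\rho(B)$ and $A=x\,\fraks(A)=x\,\hat\rho(B)$ by Lemma~\ref{lemma:subgroup}; using the unit identity $\hat\rho(B)B=B$ this gives $AB=xB=\tilde\mu(x,B)\in\frakH X$, so $\hat\mu$ is well defined. Continuity is obtained exactly as for $\frakm$ in Theorem~\ref{theorem:groupoid}: for a convergent net $((A_\nu,B_\nu))_\nu\to(A,B)$ in $\frakH G\,{_{\fraks}\times_{\hat\rho}}\,\frakH X$ and $x\in A$, picking $x^{U}_\nu\in A_\nu\cap U$ for neighbourhoods $U$ of $x$ yields $A_\nu B_\nu=\tilde\mu(x^{U}_\nu,B_\nu)$, which converges to $\tilde\mu(x,B)=AB$ by continuity of $\tilde\mu$.

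Finally the action axioms $\hat\rho(B)B=B$, $\hat\rho(AB)=\frakr(A)$ and $(A'A)B=A'(AB)$ follow from the mixed multiplication lemma together with the identities of Lemma~\ref{lemma:subgroup}: for instance $\hat\rho(B)B=\lim_\nu\rho(y_\nu)y_\nu=\lim_\nu y_\nu=B$, and choosing composable primitive nets $x_\nu\to A$, $y_\nu\to B$ gives $\hat\rho(AB)=\lim_\nu r(x_\nu)=AA^{-1}=\frakr(A)$, the associativity being the corresponding computation with three nets.
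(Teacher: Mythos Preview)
Your approach is the paper's: show that $\rho$ is w-quasi-continuous as a map $X\to\frakH G$ (so that it extends to a continuous $\hat\rho\colon\frakH X\to\frakH G$), and then treat $\hat\mu$ exactly as $\frakm$ was treated in Theorem~\ref{theorem:groupoid}. The step you isolate as the ``chief obstacle''---primitivity of $(\rho(y_\nu))_\nu$ in $G$---is precisely the one the paper dispatches in a single clause: ``the net $(\rho(x_{\nu}))_{\nu}$ converges in $G^{0}$ and therefore is primitive in $G$ and converges in $\frakH G$.'' No further justification is given, so your ``Granting it'' sits at exactly the paper's level of detail; your diagnosis that this is where the hypothesis of quasi-continuity (as opposed to mere w-quasi-continuity) must enter is on target, though the mechanism you sketch (``even along nets with empty limit set in $X$'') does not by itself explain why isotropy cluster points in $G$ are controlled.

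The remainder of your argument---the mixed analogue of Lemma~\ref{lemma:multiplication}, the reduction $AB=xB=\tilde\mu(x,B)$ via Lemma~\ref{lemma:subgroup} to show $\hat\mu$ is well defined, the continuity of $\hat\mu$ by picking $x^{U}_\nu\in A_\nu\cap U$ and appealing to continuity of $\tilde\mu$, and the explicit verification of the action identities---is correct and strictly more detailed than the paper, which simply writes ``similar arguments as in the proof of Theorem~\ref{theorem:groupoid}'' for $\hat\mu$ and declares the action axioms ``clear.''
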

\begin{proof}
  For each primitive net $(x_{\nu})_{\nu}$ in $X$, the net
  $(\rho(x_{\nu}))_{\nu}$ converges in $G^{0}$ and therefore
  is primitive in $G$ and converges in $\frakH
  G$. Hence, $\rho$ is w-quasi-continuous as a map to
  $\frakH G$ and extends as claimed. Similar arguments as in
  the proof of Theorem \ref{theorem:groupoid} show that the
  map $\hat \mu$ is well defined and continuous.
  Clearly, $(\hat \rho,\hat \mu)$ is an action.
\end{proof}
By functoriality of $\frakH$, the assignment $G \mapsto
\frakH G$ extends to a functor from the category of locally
compact groupoids with open range and source maps and
Hausdorff unit space together with proper and continuous
groupoid homomorphisms into the category of locally compact
Hausdorff groupoids with proper continuous homomorphisms.

\section{Relations to other constructions}

\paragraph{A construction of Tu}
Let $X$ be a locally compact space.  In \cite{tu},
Jean-Louis Tu associated to $X$ a Hausdorff space ${\cal H}
X$ as follows. As a set, ${\cal H} X$ consists of all
subsets $A \subset X$ satisfying the following condition:
for every family $(V_{x})_{x \in A}$ of open sets such that
$x \in V_{x}$ for all $x \in A$ and $V_{x}=X$ except perhaps
for finitely many $x \in A$, one has $\bigcap_{x \in A}
V_{x} \neq \emptyset$. This set is endowed with the topology
generated by all subsets of the form $\Omega_{V} = \{ A \in
{\cal H}X \mid A\cap V\neq\emptyset\}$ and $\Omega^{Q} =\{
A\in {\cal H}X\mid A \cap Q=\emptyset\}$, where $V \subseteq
X$ is open and $Q \subseteq X$ is quasi-compact.
\begin{proposition}
  $\frakH X$ is a subspace of ${\cal H}X$.
\end{proposition}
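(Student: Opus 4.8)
The plan is to show that $\frakH X$ and ${\cal H}X$ are set-theoretically comparable and that the topologies match on the overlap, by exhibiting a bijection between the underlying sets that is a homeomorphism onto its image. The two spaces are built from subbases of formally identical shape: Fell's ${\cal U}_V,{\cal U}^Q$ versus Tu's $\Omega_V,\Omega^Q$, indexed by the same open sets $V$ and quasi-compact sets $Q$. So once I establish that every limit set of a primitive net (a member of $\frakH X$) satisfies Tu's finite-intersection condition, the inclusion map $\frakH X \hookrightarrow {\cal H}X$ will automatically be a subspace embedding, since the subbasic Fell-open sets are precisely the restrictions to $\frakH X$ of the subbasic Tu-open sets.

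The heart of the argument is therefore the set-level claim: if $A=\lim_\nu x_\nu$ for a primitive net $(x_\nu)_\nu$ in $X$ and $A\neq\emptyset$, then $A\in{\cal H}X$. First I would unwind Tu's condition: given a family $(V_x)_{x\in A}$ of open neighbourhoods with $V_x=X$ for all but finitely many $x$, say $V_{x_1},\dots,V_{x_k}$ are the nontrivial ones, I must produce a point in $\bigcap_{i=1}^k V_{x_i}$. Since each $x_i\in A=\lim_\nu x_\nu$ is a limit point of the net, the net is eventually in each neighbourhood $V_{x_i}$; intersecting finitely many ``eventually'' conditions over the directed index set, the net is eventually in $\bigcap_{i=1}^k V_{x_i}$, so in particular this intersection is non-empty (indeed it contains $x_\nu$ for large $\nu$). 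This is exactly the required conclusion, so $A$ lies in ${\cal H}X$.

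With the inclusion $\frakH X\subseteq{\cal H}X$ in hand, I would spell out that it is a topological embedding. For each open $V\subseteq X$ one has ${\cal U}_V=\Omega_V\cap\frakH X$ and ${\cal U}^Q=\Omega^Q\cap\frakH X$ directly from the defining formulas, because membership is tested by the same intersection conditions $A\cap V\neq\emptyset$ and $A\cap Q=\emptyset$. Hence the subbasis generating the topology on $\frakH X$ is the trace on $\frakH X$ of the subbasis generating the topology on ${\cal H}X$, and a subbasis restricts to a subbasis under taking a subspace; therefore the subspace topology that ${\cal H}X$ induces on $\frakH X$ coincides with the intrinsic Fell topology. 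This establishes that $\frakH X$ is a subspace of ${\cal H}X$.

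I do not expect a serious obstacle here: the construction is essentially a bookkeeping comparison of two identically-shaped subbases, and the only genuine content is the verification of Tu's finite-intersection property for limit sets, which follows immediately from the fact that limit points of a net force the net to be eventually inside every finite intersection of their neighbourhoods. The one point to watch is the non-emptiness convention distinguishing $\frakH X=\frakK X\setminus\{\emptyset\}$ from $\frakK X$, so I would restrict attention throughout to primitive nets with non-empty limit set, which is automatic for members of $\frakH X$.
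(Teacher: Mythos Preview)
Your proposal is correct and follows essentially the same route as the paper. The only cosmetic difference is that the paper phrases the key step via density of $X$ in $\frakK X$ (the open set ${\cal U}_{V_{1}}\cap\cdots\cap{\cal U}_{V_{n}}$ must contain some singleton $\{x\}$), whereas you unpack this by feeding the defining primitive net through the finitely many neighbourhood conditions; both arguments produce the same witness and the topological comparison of the two subbases is handled identically.
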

\begin{proof}
  Let $A \in \frakH X$ and let $(V_{x})_{x \in A}$ be a
  family of open sets such that $x \in V_{x}$ for all $x \in
  A$ and $V_{x}=X$ for all but
  finitely many $V_{1},\ldots,V_{n}$. Then ${\cal
    U}_{V_{1}} \cap \cdots \cap {\cal U}_{V_{n}}$ is a
  neighbourhood of $A$ and therefore contains a point $x \in
  X$, that is, $V_{1} \cap \cdots \cap V_{n} \neq
  \emptyset$.  The topology on $\frakH X$
  coincides with the subspace topology inherited from ${\cal
    H}X$ by definition.
\end{proof}
Note that Proposition \ref{proposition:action} is an
analogue of \cite[Proposition 3.10]{tu}.

\paragraph{A spectral picture of the Fell compactification}
Let $X$ be a locally compact space and denote by ${\cal
  B}(X)$ the $C^{*}$-algebra of all bounded Borel functions
on $X$. Given an open Hausdorff subset $U \subseteq X$ and a
function $f \in C_{c}(U)$, we identify $f$ with a function
on $X$ extending it outside of $U$ by $0$, and denote by
$\supp f$ the support of $f$ inside $U$. Denote by ${\cal A}_{c}(X)
\subseteq {\cal B}(X)$ the smallest subalgebra containing
$C_{c}(U)$ for each open Hausdorff subset $U \subseteq X$,
and by ${\cal A}_{0}(X)$ its norm closure.
\begin{proposition} \label{proposition:spectral}
  Restriction of functions defines a $*$-isomorphism
  $C_{0}(\frakH X) \to {\cal A}_{0}(X)$ that maps
  $C_{c}(\frakH X)$ to ${\cal A}_{c}(X)$.
\end{proposition}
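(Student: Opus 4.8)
The plan is to realise the restriction map concretely and then identify its image in two stages, first the $C^{*}$-completions and then the dense subalgebras. Write $\iota \colon X \to \frakH X$, $x \mapsto \{x\}$, for the canonical embedding, so that the restriction of $g \in C_{0}(\frakH X)$ is the function $g|_{X} := g \circ \iota$, $x \mapsto g(\{x\})$. Since $\iota$ has dense image and $g$ is continuous, $\sup_{x \in X}|g(\{x\})| = \sup_{A \in \frakH X}|g(A)|$, so $g \mapsto g|_{X}$ is isometric for the supremum norms; being also multiplicative and $*$-preserving (all operations are pointwise), it is an injective $*$-homomorphism. Everything then reduces to computing its image, which I would show equals ${\cal A}_{c}(X)$ on $C_{c}(\frakH X)$ and ${\cal A}_{0}(X)$ on $C_{0}(\frakH X)$; Borel measurability of $g|_{X}$ comes for free once membership in ${\cal A}_{0}(X) \subseteq {\cal B}(X)$ is established.

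The geometric input is that for every open Hausdorff $U \subseteq X$ and every $A \in \frakH X$ the set $A \cap U$ has at most one point: two distinct points of $A \cap U$ could be separated by disjoint open sets, contradicting that both are limit points of a single primitive net. Hence, for $f \in C_{c}(U)$ (extended by $0$ outside $U$) the formula $\tilde f(A) := f(x)$ if $A \cap U = \{x\}$ and $\tilde f(A) := 0$ if $A \cap U = \emptyset$ gives a well-defined function on $\frakH X$ with $\tilde f \circ \iota = f$. I would check $\tilde f \in C_{c}(\frakH X)$ via Lemma \ref{lemma:continuous}: along a primitive net $(x_{\nu})_{\nu}$ with limit set $A$, either $A \cap \supp f = \emptyset$, in which case the third characterisation of primitivity forces the net to leave the quasi-compact set $\supp f$ eventually, so $f(x_{\nu}) \to 0 = \tilde f(A)$, or $A \cap \supp f = \{x\}$, in which case $x$ is a limit point and continuity of $f$ on $U$ gives $f(x_{\nu}) \to f(x) = \tilde f(A)$; compact support of $\tilde f$ follows since $\{A : A \cap \supp f \neq \emptyset\}$ is the complement of the subbasic open set ${\cal U}^{\supp f}$, hence closed in the compact space $\frakK X$ and contained in $\frakH X$. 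Let $\mathcal D \subseteq C_{c}(\frakH X)$ be the $*$-subalgebra generated by all such $\tilde f$. Then $g \mapsto g|_{X}$ carries $\mathcal D$ onto ${\cal A}_{c}(X)$, since it sends the generators $\tilde f$ to the generators $f$ of ${\cal A}_{c}(X)$; in particular ${\cal A}_{c}(X) \subseteq (C_{c}(\frakH X))|_{X}$.

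Because the sets ${\cal U}_{U}$ with $U$ open Hausdorff cover $\frakH X$ and one can arrange $\tilde f \equiv 1$ near any prescribed point, $\mathcal D$ separates points of $\frakH X$ and vanishes nowhere; by the locally compact Stone--Weierstrass theorem it is dense in $C_{0}(\frakH X)$. Since restriction is isometric, its image is closed, contains the dense subalgebra ${\cal A}_{c}(X)$, and is contained in ${\cal A}_{0}(X)$; hence $(C_{0}(\frakH X))|_{X} = {\cal A}_{0}(X)$ and restriction is an isometric $*$-isomorphism $C_{0}(\frakH X) \to {\cal A}_{0}(X)$. This settles the $C_{0}$--${\cal A}_{0}$ statement.

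The remaining and hardest point is the reverse inclusion at the compactly supported level, $(C_{c}(\frakH X))|_{X} \subseteq {\cal A}_{c}(X)$, which is genuinely algebraic (${\cal A}_{c}(X)$ is not closed, so uniform approximation by $\mathcal D$ does not suffice). I would first produce local units inside $\mathcal D$: given $g \in C_{c}(\frakH X)$ with $K := \supp g$, choosing finitely many open Hausdorff charts $U_{i}$ and $f_{i} \in C_{c}(U_{i})$ with $0 \le \tilde f_{i} \le 1$ whose regions $\{\tilde f_{i} = 1\}$ cover $K$, the telescoping functions $\phi_{i} := \tilde f_{i}\prod_{j<i}(1 - \tilde f_{j})$ lie in $\mathcal D$, are supported in ${\cal U}_{U_{i}}$, and satisfy $\sum_{i}\phi_{i} = 1 - \prod_{i}(1 - \tilde f_{i}) \equiv 1$ on $K$, so $g = \sum_{i} g\phi_{i}$. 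This reduces the claim to a single chart, namely that $h|_{X} \in {\cal A}_{c}(X)$ whenever $h \in C_{c}(\frakH X)$ is supported in one ${\cal U}_{U}$. Here $h|_{X}$ vanishes off $U$, and the only obstruction to $h|_{U} \in C_{c}(U)$ is failure of $x \mapsto \{x\}$ to be continuous at ``branch points'' $x_{0} \in U$ admitting a primitive net $x_{\nu} \to x_{0}$ in $U$ whose limit set $A_{0} \supsetneq \{x_{0}\}$ meets $X \setminus U$. I expect this single-chart case to be the main obstacle: the plan is to use compactness of $\supp h$ together with auxiliary open Hausdorff charts covering the extra points $A_{0} \setminus U$ to subtract finitely many ${\cal A}_{c}(X)$-corrections (built from the $\tilde f$ of those charts, exactly as a product $\tilde f\,(1 - \tilde f')$ isolates a jump concentrated at one extra point), thereby reducing to the case where every set in $\supp h$ is contained in $U$, in which $x \mapsto \{x\}$ is continuous and $h|_{U} \in C_{c}(U) \subseteq {\cal A}_{c}(X)$.
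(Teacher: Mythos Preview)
Through the Stone--Weierstrass step your argument coincides with the paper's: both show that each $f\in C_{c}(U)$ is w-quasi-continuous via the dichotomy ``$\supp f$ contains a limit point of the primitive net, or is eventually left by it'', embed ${\cal A}_{0}(X)$ into $C_{0}(\frakH X)$ by extension, separate points to obtain surjectivity, and deduce ${\cal D}\subseteq C_{c}(\frakH X)$ from the compactness of $\frakK X\setminus {\cal U}^{Q}$. Where you diverge is the reverse inclusion $C_{c}(\frakH X)|_{X}\subseteq {\cal A}_{c}(X)$, which the paper dismisses with the phrase ``it follows''. You are right to be suspicious: Stone--Weierstrass only gives density, and ${\cal A}_{c}(X)$ is not closed, so this step is not immediate.

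Your partition-of-unity reduction to a single chart ${\cal U}_{U}$ is sound, but the terminal claim --- that once every $A\in\supp h$ lies inside $U$ one has $h|_{U}\in C_{c}(U)$ --- is false. Take $X$ to be the line with two origins $0_{1},0_{2}$, let $U=U_{1}$ be the Hausdorff copy containing $0_{1}$, and let $h$ be the indicator of the isolated point $\{0_{1}\}\in\frakH X$. Then $\supp h=\{\{0_{1}\}\}$ consists of a single set contained in $U$, yet $h|_{X}=\mathbf 1_{\{0_{1}\}}$ is discontinuous on $U\cong\mathbb R$. The continuity of $x\mapsto\{x\}$ at a branch point $x_{0}$ is a feature of $X$ alone and cannot be manufactured by trimming $\supp h$: your corrections remove the multi-point sets $A_{0}$ from $\supp h$, but the obstruction already lives at the singleton $\{x_{0}\}$. (In the example $h|_{X}$ \emph{does} lie in ${\cal A}_{c}(X)$ --- for instance as $f_{1}(f_{1}-f_{2})$ with bump functions $f_{i}\in C_{c}(U_{i})$ equal to $1$ at $0_{i}$ and agreeing off the origins --- but only as a genuine product, never as an element of a single $C_{c}(U)$.) So this inclusion, which the paper also leaves unjustified, still needs a different argument.
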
 
\begin{proof}
  Let $U \subseteq X$ be open and Hausdorff and let $f \in
  C_{c}(U)$.  If $(x_{\nu})_{\nu}$ is a primitive net in $X$
  with non-empty limit set, then either $\supp f$ contains a
  limit point of the net, $x$, say, or $\supp f$ is eventually left by
  the net. In the first case, $f(x_{\nu})$ converges to $f(x)$
  because $f$ is continuous on $U$, and in the second case,
  $f(x_{\nu})$ is constant $0$. Thus, $f$ is
  w-quasi-continuous on $X$. Consequently, each function in
  ${\cal A}_{0}(X)$ is w-quasi-continuous and extension of
  functions defines an embedding $\iota \colon {\cal
    A}_{0}(X) \hookrightarrow C_{0}(\frakH X)$.  If $A,B \in
  \frakH X$ and $x \in A \setminus B$, then there exists an
  open Hausdorff neighbourhood $U$ of $x$ that is disjoint
  to $B$, and for each $f \in C_{c}(U)$ we have
  $(\iota(f))(A)=f(x)$ and $(\iota(f))(B)=0$.  Therefore,
  $\iota({\cal A}_{c}(X))$ separates the points of $\frakH
  X$.  By the Stone-Weierstrass theorem, $\iota$ is
  surjective, and it follows that $C_{c}(\frakH X) \subseteq
  \iota({\cal A}_{c})$. The last inclusion is an equality
  because for each $f \in {\cal A}_{c}(X)$, there exists a
  quasi-compact set $Q \subseteq X$ such that $f|_{X
    \setminus Q} = 0$, and ${\cal U}^{Q}$ is a neighbourhood
of $\emptyset$  in $\frakK X$  such that $\iota(f)|_{{\cal
      U}^{Q} \cap \frakH X } \equiv 0$.
\end{proof}

\paragraph{The left-regular representation of Khoshkam and Skandalis}
Let $G$ be an \'etale, locally compact groupoid with open
range map and Hausdorff unit space.  In \cite{khoshkam},
Mahmood Khoshkam and Georges Skandalis define a left-regular
representation of $G$ using Hilbert $C^{*}$-modules
\cite{lance} and functional-analytic tools.  The action $\tilde m \colon G
{_{s}\times_{\tilde r}} \frakH G \to \frakH G$ provides the
following geometric picture for their construction.

Let $C_{c}(G)$ be the linear span of all subspaces
$C_{c}(U)$, where $U \subseteq G$ is open and
Hausdorff. Then $C_{c}(G)$ is a $*$-algebra with respect to
the operations
\begin{align} \label{eq:convolution} (f \ast g)(x) &=
  \sum_{z \in G^{r(x)}} f(z)g(z^{-1}x), & f^{*}(x) &=
  \overline{f(x^{-1})}, && \text{ where } x \in G, f,g \in
  C_{c}(G).
\end{align}
Denote ${\cal B}(G^{0})$ the $C^{*}$-algebra of bounded
Borel functions. Let $D \subseteq {\cal B}(G^{0})$ be the
$C^{*}$-subalgebra generated by all restrictions
$f|_{G^{0}}$, where $f \in C_{c}(G)$, let $Y$ be the
spectrum of $D$, and identify $D$ with $C_{0}(Y)$ in the
canonical way. Then the algebraic tensor product $C_{c}(G)
\odot C_{0}(Y)$ is a pre-Hilbert $C^{*}$-module over
$C_{0}(Y)$ with respect to the inner product and right
module structure given by
\begin{align*}
  \langle f \odot a|g \odot b\rangle &= a^{*} \cdot
  (f^{*} \ast g)|_{G^{0}} \cdot b, & (f \odot a)b &= f \odot ab,
  && \text{where } f,g \in C_{c}(G), a,b \in
  C_{0}(Y).
\end{align*}
Denote by $L^{2}(G)$ the separated completion.   There exists a
representation $L_{G}\colon C_{c}(G) \to {\cal L}(L^{2}(G))$
given by $L_{G}(f)(g \odot a) = f \ast g \odot a$ for all
$f,g \in C_{c}(G)$, $a \in C_{0}(Y)$. Denote by
$C^{*}_{r}(G)$ the completion of
$L_{G}(C_{c}(G))$. Proposition \ref{proposition:spectral}
applied to $G$ immediately implies:
\begin{lemma} \label{lemma:y} The map $f \mapsto f|_{G^{0}}$
  is a $*$-isomorphism $C_{0}((\frakH G)^{0}) \to C_{0}(Y)
  \subseteq {\cal B}(G^{0})$.  In particular, $Y$ is
  homeomorphic to $(\frakH G)^{0}$.  \qed
\end{lemma}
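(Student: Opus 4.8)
The plan is to obtain the lemma by applying Proposition~\ref{proposition:spectral} to the space $X=G$ and then restricting functions from $\frakH G$ down to the unit space. The one geometric fact I would use is that, by Theorem~\ref{theorem:groupoid}, $(\frakH G)^{0}$ is the closure of $G^{0}$ in $\frakH G$; in particular $G^{0}$ is dense in $(\frakH G)^{0}$, and $(\frakH G)^{0}$ is a closed, hence locally compact Hausdorff, subspace of $\frakH G$. The end goal is to show that $f\mapsto f|_{G^{0}}$ is an isometric $*$-isomorphism of $C_{0}((\frakH G)^{0})$ onto $D$, after which the homeomorphism $Y\cong(\frakH G)^{0}$ drops out of Gelfand duality.

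Concretely, I would proceed in three steps. First, Proposition~\ref{proposition:spectral} with $X=G$ provides a $*$-isomorphism $\iota\colon C_{0}(\frakH G)\to{\cal A}_{0}(G)$, $F\mapsto F|_{G}$, carrying $C_{c}(\frakH G)$ onto ${\cal A}_{c}(G)$. Second, because $G^{0}$ is dense in $(\frakH G)^{0}$, restriction $C_{0}((\frakH G)^{0})\to{\cal B}(G^{0})$, $F\mapsto F|_{G^{0}}$, is an isometric $*$-homomorphism, so its image is automatically a $C^{*}$-subalgebra of ${\cal B}(G^{0})$; and since restricting a function on $\frakH G$ to $G^{0}$ may be carried out either directly or via $(\frakH G)^{0}$, and since restriction $C_{0}(\frakH G)\to C_{0}((\frakH G)^{0})$ is onto (as $(\frakH G)^{0}$ is closed), this image equals ${\cal A}_{0}(G)|_{G^{0}}$. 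Third, I would identify ${\cal A}_{0}(G)|_{G^{0}}$ with $D$.

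The identification $D={\cal A}_{0}(G)|_{G^{0}}$ is where the actual work sits, and I expect it to be the main (if routine) obstacle. For $\subseteq$: each generator $f|_{G^{0}}$ of $D$ has $f\in C_{c}(G)\subseteq{\cal A}_{c}(G)\subseteq{\cal A}_{0}(G)$, so $f|_{G^{0}}\in{\cal A}_{0}(G)|_{G^{0}}$; as the latter is a $C^{*}$-algebra this gives $D\subseteq{\cal A}_{0}(G)|_{G^{0}}$. For $\supseteq$: since ${\cal A}_{c}(G)$ is generated as a $*$-algebra by the subspaces $C_{c}(U)\subseteq C_{c}(G)$, the restrictions ${\cal A}_{c}(G)|_{G^{0}}$ are generated by functions $h|_{G^{0}}$ with $h\in C_{c}(U)$, all lying in $D$; because restriction is norm-decreasing and $D$ is closed, ${\cal A}_{0}(G)|_{G^{0}}=\overline{{\cal A}_{c}(G)}|_{G^{0}}\subseteq\overline{{\cal A}_{c}(G)|_{G^{0}}}\subseteq D$. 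Combining the steps, $f\mapsto f|_{G^{0}}$ is an isometric $*$-isomorphism $C_{0}((\frakH G)^{0})\to D=C_{0}(Y)$, and passing to spectra yields $Y\cong(\frakH G)^{0}$. The one substantive ingredient beyond Proposition~\ref{proposition:spectral}, namely the density of $G^{0}$ in $(\frakH G)^{0}$, is exactly what Theorem~\ref{theorem:groupoid} supplies, so the remaining difficulty is only the bookkeeping that matches the $C^{*}$-algebra generated by $C_{c}(G)|_{G^{0}}$ with all of ${\cal A}_{0}(G)|_{G^{0}}$.
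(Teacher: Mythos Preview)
Your proposal is correct and follows exactly the route the paper intends: the paper's entire proof is the sentence ``Proposition~\ref{proposition:spectral} applied to $G$ immediately implies'' together with the \qedsymbol, and you have spelled out precisely how that application works --- passing from $C_{0}(\frakH G)\cong{\cal A}_{0}(G)$ down to the closed subspace $(\frakH G)^{0}$, using density of $G^{0}$ there (Theorem~\ref{theorem:groupoid}) for injectivity, and matching ${\cal A}_{0}(G)|_{G^{0}}$ with $D$.
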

The space $C_{c}(\frakH G)$ carries the structure of a
$*$-algebra, where the operations are defined similarly as
in \eqref{eq:convolution}, and the structure of a
pre-Hilbert $C^{*}$-module over $C_{0}((\frakH G)^{0})$,
where $\langle f|g\rangle = (f^{*} \ast g)|_{(\frakH G)^{0}}$ and
$(fh)(A) = f(A)h(\fraks(A))$ for all $f,g \in C_{c}(\frakH
G), h \in C_{0}((\frakH G)^{0}), A \in \frakH G$.  Denote
the completion of this pre-Hilbert $C^{*}$-module by
$L^{2}(\frakH G)$. Then there exists a representation
$L_{\frakH G} \colon C_{c}(\frakH G) \to {\cal
  L}(L^{2}(\frakH G))$ such that $L_{\frakH G}(f)g=f \ast g$ for
all $f,g \in C_{c}(G)$, and the closure $C^{*}_{r}(\frakH
G)=\overline{L_{\frakH G}(C_{c}(G))}$ is the reduced
$C^{*}$-algebra of $\frakH G$ \cite{paterson,renault}.  Identify $C_{0}(Y)$
with $C_{0}((\frakH G)^{0})$ as in Lemma \ref{lemma:y}, and
$C_{c}(G)$ with a subspace of ${\cal A}_{c}(G)=C_{c}(\frakH
G)$, see Proposition
\ref{proposition:spectral}. Denote by $\frakr^{*} \colon
C_{0}((\frakH G)^{0}) \to {\cal L}(L^{2}(\frakH G))$ the
representation given by $((\frakr^{*}h)f) (A) =
h(\frakr(A))f(A)$ for all $h \in C_{0}((\frakH G)^{0})$, $f
\in C_{c}(\frakH G)$, $A \in \frakH G$, and by
$s^{*}$ the pull-back of functions from $G^{0}$ to $G$ via
$s$.
\begin{lemma} \label{lemma:module} $ {\cal A}_{c}(G) =
  C_{c}(G)s^{*}(C_{0}(Y))$.
\end{lemma}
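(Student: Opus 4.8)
The plan is to transport the identity to the Hausdorff groupoid $\frakH G$ and its unit space. By Proposition~\ref{proposition:spectral} applied to $G$, restriction of functions identifies $\mathcal{A}_c(G)$ with $C_c(\frakH G)$, the inverse being the extension map $\iota$; by Lemma~\ref{lemma:y}, restriction identifies $C_0((\frakH G)^0)$ with $C_0(Y)$, and for $h \in C_0(Y)$ I write $\hat h \in C_0((\frakH G)^0)$ for the function with $\hat h|_{G^0}=h$. Since $\fraks(\{x\})=\{s(x)\}$ for $x \in G$, the pulled-back function $s^{*}h$ on $G$ is exactly the restriction to $G$ of $\hat h \circ \fraks \in C_b(\frakH G)$. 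Throughout I reduce to open Hausdorff $G$-sets $V$: using that $C_c(G)$ is spanned by the subspaces $C_c(V)$ and that the sets $\mathcal{U}_V$ cover $\frakH G$ (as in the proof that $\frakH G$ is \'etale), together with a partition of unity, it suffices to handle one $V$ at a time.

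For the inclusion $\mathcal{A}_c(G) \supseteq C_c(G)\,s^{*}(C_0(Y))$ I would argue directly. Given $g \in C_c(V)$ and $h \in C_0(Y)$, the function $\iota(g)\cdot(\hat h \circ \fraks)$ lies in $C_c(\frakH G)$, being the product of the compactly supported continuous function $\iota(g)$ and the bounded continuous function $\hat h \circ \fraks$; its restriction to $G$ equals $g\cdot s^{*}h$, so $g\cdot s^{*}h \in \mathcal{A}_c(G)$. Summing over the pieces of a general element of $C_c(G)$ gives this inclusion.

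The reverse inclusion is the substantial one. Given $F \in \mathcal{A}_c(G)$ with extension $\tilde F \in C_c(\frakH G)$, a partition of unity subordinate to a finite subcover $\mathcal{U}_{V_1},\dots,\mathcal{U}_{V_n}$ of the compact set $\supp \tilde F$ reduces me to the case where $\tilde F$ is supported in a single $\mathcal{U}_V$. On $\mathcal{U}_V$ the source map restricts to a homeomorphism $\fraks \colon \mathcal{U}_V \to \mathcal{U}_{s(V)}\cap(\frakH G)^0$ with inverse $\frakt$, so I put $\hat h := \tilde F \circ \frakt \in C_c((\frakH G)^0)$, which satisfies $\hat h \circ \fraks = \tilde F$ on $\mathcal{U}_V$. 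It then remains to produce a cutoff $g \in C_c(V)$ with $\iota(g)\equiv 1$ on $\supp \tilde F$ and $\iota(g)\equiv 0$ off $\mathcal{U}_V$; for such a $g$ one gets $\tilde F = \iota(g)\cdot(\hat h\circ\fraks)$ on all of $\frakH G$, and restriction to $G$ yields $F = g\cdot s^{*}h$ with $h := \hat h|_{G^0}\in C_0(Y)$.

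The main obstacle is precisely this last step: since the embedding $G \hookrightarrow \frakH G$ need not be continuous, an arbitrary cutoff on $\frakH G$ restricts only to an element of $\mathcal{A}_c(G)$, not to a genuinely continuous element of $C_c(V)$. I expect to resolve this using that $V$ is a $G$-set. For $A \in \mathcal{U}_V$, every limit point $x \in A \cap V$ satisfies $s(x)=\tilde s(A)$ (the net defining $A$ is primitive, so $\tilde s(A)=\lim_\nu s(x_\nu)$ is its unique limit in the Hausdorff space $G^0$); as $s|_V$ is injective, $A \cap V$ is a single point and $\tilde s(A)\in s(V)$. Consequently, for $\bar g \in C_c(s(V))$ the honestly continuous function $g := \bar g \circ (s|_V) \in C_c(V)$ satisfies $\iota(g)(A)=\bar g(\tilde s(A))$ for $A \in \mathcal{U}_V$ and $\iota(g)(A)=0$ otherwise. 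Choosing $\bar g$, by Urysohn on the locally compact Hausdorff space $s(V)$, equal to $1$ on the compact set $\tilde s(\supp \tilde F)\subseteq s(V)$ then gives a cutoff $g$ of the required form, completing the proof.
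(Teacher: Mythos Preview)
Your argument is correct but follows a genuinely different route from the paper's. The paper never passes to $\frakH G$: it stays on $G$ and uses convolution. For $f\in C_c(U)$, $g\in C_c(V)$ with $U,V$ open Hausdorff $G$-sets, it picks $h\in C_c(U)$ with $h\equiv 1$ on $\supp f$ and verifies the pointwise identities $fg=f\,s^{*}\big((h^{*}\ast g)|_{G^{0}}\big)$ and $f\,s^{*}(g|_{G^{0}})=f(h\ast g)$; together with a partition of unity this gives $C_c(G)C_c(G)=C_c(G)s^{*}(C_0(Y))$, and then induction on the number of factors yields $\mathcal{A}_c(G)=\sum_n C_c(G)^{n}=C_c(G)s^{*}(C_0(Y))$. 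So the paper's key device is the convolution trick $h^{*}\ast g$ that ``reads off'' $g$ along the source fibre, whereas your key device is the homeomorphism $\fraks\colon \mathcal{U}_V\to\mathcal{U}_{s(V)}\cap(\frakH G)^{0}$ and a Urysohn cutoff pulled back through $s|_V$. The paper's approach is shorter and entirely elementary; your approach is more geometric and makes transparent why the lemma should hold---it is essentially the statement that on an \'etale Hausdorff groupoid every compactly supported continuous function factors on a bisection as a cutoff times a pullback from the unit space, applied to $\frakH G$. One small remark: your deduction that $A\cap V$ is a singleton via injectivity of $s|_V$ is fine (and matches the paper's implicit use of $s_V^{-1}$), but you could also get it directly from $V$ being Hausdorff, since any two points of $A$ are limits of the same net.
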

\begin{proof}
  Let $U,V \subseteq G$ be open Hausdorff $G$-sets and let
  $f\in C_{c}(U)$, $g \in C_{c}(V)$. Choose $h \in C_{c}(U)$
  such that $h|_{\supp f} = 1$.  Then $h^{*} \ast g, h
  \ast g \in C_{c}(G)$, 
  \begin{gather*}
    \begin{aligned}
      (h^{*} \ast g)(s(x)) &= g(x) \text{ for all } x \in
      \supp f, & f(x)g(x) &= f(x) (h^{*} \ast g)(s(x))
      \text{ for all } x \in G,
    \end{aligned}
\\
\begin{aligned}
g(s(x))  &=   (h \ast g)(x) \text{ for all } x \in \supp f, &
  f(x)g(s(x)) &= f(x)(h \ast g)(x) \text{ for all } x \in G,
\end{aligned}
  \end{gather*}
  and hence $fg = f s^{*}((h^{*} \ast g)|_{G^{0}})$ and
  $fs^{*}(g|_{G^{0}}) = f(h \ast g)$.  Using a partition of
  unity argument and the fact that $G$ is \'etale, we can
  conclude that $C_{c}(G)C_{c}(G) =
  C_{c}(G)s^{*}(C_{0}(Y))$. 
  By induction, we find that $C_{c}(G)^{n} =
  C_{c}(G)^{n-1}s^{*}(C_{0}(Y)) = \cdots =
  C_{c}(G)s^{*}(C_{0}(Y))$ for each $n \in \mathbb{N}$ and
  therefore ${\cal A}_{c}(G) =
  C_{c}(G)s^{*}(C_{0}(Y))$. 
\end{proof}
\begin{proposition}\label{proposition:module}
  \begin{enumerate}
  \item There exists an isomorphism $\Phi \colon L^{2}(G)
    \to L^{2}(\frakH G)$ of Hilbert $C^{*}$-modules such
    that $\Phi(f \odot a) =fs^{*}(a)$ for all $f \in
    C_{c}(G) \subseteq C_{c}(\frakH G)$,
    $a \in C_{0}(Y)$.
  \item The $C^{*}$-algebra generated by
    $\Ad_{\Phi}(C^{*}_{r}(G))$ and
    $\frakr^{*}(C_{0}((\frakH G)^{0}))$ is
  $C^{*}_{r}(\frakH G)$.
  \end{enumerate}
\end{proposition}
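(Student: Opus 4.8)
\emph{Strategy.} Everything rests on one compatibility statement that I would prove first: the inclusion $C_{c}(G) \hookrightarrow C_{c}(\frakH G)={\cal A}_{c}(G)$ of Proposition \ref{proposition:spectral} intertwines the two convolution products, i.e.\ $f \ast g$ computed in $G$ agrees with $f \ast g$ computed in $\frakH G$ for all $f,g \in C_{c}(G)$. I would verify this by checking equality of the two continuous functions on the dense subset $G \subseteq \frakH G$: for (the image of) $x \in G$, the elements $B \in \frakH G$ contributing to $(f \ast g)(x) = \sum_{B} f(B) g(B^{-1}x)$ must meet the Hausdorff $G$-sets supporting $f$ and satisfy $\frakr(B)=\frakr(x)$; using the étale structure one checks that such $B$ are singletons coming from $G$, so the $\frakH G$-sum collapses to the ordinary $G$-convolution sum. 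I expect this to be the main obstacle, since Proposition \ref{proposition:spectral} only identifies the pointwise algebras and says nothing about convolution.

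\emph{Part 1.} I would define $\Phi_{0}\colon C_{c}(G) \odot C_{0}(Y) \to C_{c}(\frakH G)$ by $\Phi_{0}(f \odot a)=f s^{*}(a)$, which lands in ${\cal A}_{c}(G)=C_{c}(\frakH G)$ by Lemma \ref{lemma:module}. It is a right-module map because $s^{*}$ is multiplicative and the right action of $h$ on $C_{c}(\frakH G)$ is pointwise multiplication by $h \circ \fraks$, which restricts on $G$ to $s^{*}(h)$. To see that $\Phi_{0}$ is isometric I would compute, using $r(z)=r(x)$ and $s(z^{-1}x)=s(x)$ in the convolution sum,
\[
  \bigl((f s^{*}(a))^{*} \ast (g s^{*}(b))\bigr)(x) = \overline{a(r(x))}\, b(s(x))\, (f^{*} \ast g)(x) \qquad (x \in G),
\]
so that restricting to $(\frakH G)^{0}$ and applying Lemma \ref{lemma:y} gives $\langle \Phi_{0}(f\odot a)\mid\Phi_{0}(g\odot b)\rangle = a^{*}(f^{*}\ast g)|_{G^{0}}b = \langle f\odot a\mid g\odot b\rangle$. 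Hence $\Phi_{0}$ preserves inner products, descends to an isometry $\Phi$ of the separated completions, and is surjective because its range spans $C_{c}(G)s^{*}(C_{0}(Y))=C_{c}(\frakH G)$, which is dense in $L^{2}(\frakH G)$.

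\emph{Part 2, first inclusion.} I would place both generators in $C^{*}_{r}(\frakH G)$. Since $\frakH G$ is étale, $(\frakH G)^{0}$ is open, and for $h \in C_{c}((\frakH G)^{0}) \subseteq C_{c}(\frakH G)$ only the term $B=\frakr(A)$ survives in $h \ast \xi$, giving $L_{\frakH G}(h)=\frakr^{*}(h)$; as $\frakr^{*}$ is contractive, $\frakr^{*}(C_{0}((\frakH G)^{0})) \subseteq C^{*}_{r}(\frakH G)$. Next, a direct computation on the vectors $\Phi(g \odot a)=g s^{*}(a)$, combined with $f \ast (g s^{*}(a)) = (f \ast g) s^{*}(a)$ (which uses $\fraks(B^{-1}A)=\fraks(A)$) and the compatibility of convolutions, shows $\Ad_{\Phi}(L_{G}(f)) = L_{\frakH G}(f)$ for $f \in C_{c}(G)$. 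Thus $\Ad_{\Phi}(C^{*}_{r}(G)) = \overline{L_{\frakH G}(C_{c}(G))} \subseteq C^{*}_{r}(\frakH G)$, and the generated $C^{*}$-algebra is contained in $C^{*}_{r}(\frakH G)$.

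\emph{Part 2, reverse inclusion.} Finally I would establish the key relation
\[
  L_{\frakH G}(f)\,\frakr^{*}(h) = L_{\frakH G}\bigl(f s^{*}(h)\bigr) \qquad (f \in C_{c}(G),\ h \in C_{0}((\frakH G)^{0})),
\]
which follows by computing $f \ast (\frakr^{*}(h)\xi)$ and using $\frakr(B^{-1}A) = \fraks(B)$, so that the multiplier $h$ is converted into the pointwise factor $h \circ \fraks$, i.e.\ $s^{*}(h)$ on $G$. Since $C_{c}(\frakH G) = C_{c}(G) s^{*}(C_{0}(Y))$ by Lemma \ref{lemma:module}, this relation exhibits every $L_{\frakH G}(k)$ with $k \in C_{c}(\frakH G)$ as a product of elements of $L_{\frakH G}(C_{c}(G))=\Ad_{\Phi}(C^{*}_{r}(G))$ and $\frakr^{*}(C_{0}((\frakH G)^{0}))$. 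Taking closures yields $C^{*}_{r}(\frakH G) \subseteq$ the generated algebra, and together with the first inclusion this gives the desired equality.
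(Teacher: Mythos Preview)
Your proof is correct and follows essentially the same route as the paper, which simply records the two relations $\langle fs^{*}(a)\mid gs^{*}(b)\rangle_{L^{2}(\frakH G)} = \langle f \odot a\mid g \odot b\rangle_{L^{2}(G)}$ and $\Ad_{\Phi}(L_{G}(f))\frakr^{*}(h) = L_{\frakH G}(fs^{*}(h))$ and invokes Lemma~\ref{lemma:module}. Your explicit check that the $G$- and $\frakH G$-convolutions of elements of $C_{c}(G)$ agree, and your separation of Part~2 into two inclusions (with the intermediate identity $\Ad_{\Phi}(L_{G}(f))=L_{\frakH G}(f)$), just spell out what the paper calls ``straightforward''.
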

\begin{proof}
  Both assertions follow from  Lemma \ref{lemma:module} and
  the straightforward relations
  \begin{align*}
    \langle fs^{*}(a)|gs^{*}(b)\rangle_{L^{2}(\frakH G)} &=
    \langle f \odot a|g \odot b\rangle_{L^{2}(G)} &&
    \text{for all } f,g \in
    C_{c}(G), a,b \in C_{0}(Y), \\
   \Ad_{\Phi}(L_{G}(f))\frakr^{*}(h) &= L_{\frakH
    G}(fs^{*}(h))  && \text{for all }
  f \in C_{c}(G), h \in C_{0}(Y). \qedhere
  \end{align*}
\end{proof}


\begin{thebibliography}{1}

\bibitem{fell}
J.~M.~G. Fell.
\newblock A {H}ausdorff topology for the closed subsets of a locally compact
  non-{H}ausdorff space.
\newblock {\em Proc. Amer. Math. Soc.}, 13:472--476, 1962.

\bibitem{khoshkam}
M.~Khoshkam and G.~Skandalis.
\newblock Regular representation of groupoid {$C\sp *$}-algebras and
  applications to inverse semigroups.
\newblock {\em J. Reine Angew. Math.}, 546:47--72, 2002.

\bibitem{lance}
E.~C. Lance.
\newblock {\em Hilbert {$C\sp *$}-modules}, volume 210 of {\em London
  Mathematical Society Lecture Note Series}.
\newblock Cambridge University Press, Cambridge, 1995.
\newblock A toolkit for operator algebraists.

\bibitem{paterson}
A.~L.~T. Paterson.
\newblock {\em Groupoids, inverse semigroups, and their operator algebras},
  volume 170 of {\em Progress in Mathematics}.
\newblock Birkh\"auser Boston Inc., Boston, MA, 1999.

\bibitem{renault}
J.~Renault.
\newblock {\em A groupoid approach to {$C\sp{\ast} $}-algebras}, volume 793 of
  {\em Lecture Notes in Mathematics}.
\newblock Springer, Berlin, 1980.

\bibitem{tu}
J.-L. Tu.
\newblock Non-{H}ausdorff groupoids, proper actions and {$K$}-theory.
\newblock {\em Doc. Math.}, 9:565--597 (electronic), 2004.

\end{thebibliography}
\end{document}